\documentclass[11pt]{article}

\usepackage[T1]{fontenc}
\usepackage[utf8]{inputenc}
\usepackage{lmodern}
\usepackage{microtype}

\usepackage{amsmath,amssymb,mathtools}
\usepackage{amsthm}
\usepackage{accents} 
\usepackage{mathrsfs}
\usepackage{enumerate}
\usepackage{comment}
\usepackage[ruled,lined,linesnumbered]{algorithm2e}

\usepackage{xcolor}
\usepackage{graphicx}
\usepackage{tikz}
\usepackage{subfigure}
\usepackage[affil-it]{authblk}
\usepackage[margin=1in]{geometry}

\usepackage{url}
\usepackage{hyperref}
\usepackage{thm-restate}
\usepackage[noabbrev,capitalise]{cleveref}

\providecommand{\lcm}{\operatorname{lcm}}

\theoremstyle{plain}
\newtheorem{thm}{Theorem}[section]

\newtheorem{lem}[thm]{Lemma}
\newtheorem{claim}{Claim}

{\noindent \emph{Proof.} {}{#1}{}}{\hfill
	$\Diamond$\vspace{1em}}

\theoremstyle{plain}

 \def\dfn#1{{\sl #1}}
 
\newcommand{\less}{\setminus}

\DeclareTextAccent{\caronaccent}{T1}{7}

\newcommand{\LL}{\mathcal L}
\DeclareMathOperator{\tw}{tw}

\usetikzlibrary{calc,positioning}

\title{Positively Lin--Lu--Yau curved graphs with no  $K_{2,t}$ minor}
\author{Shuliang Bai\thanks{Beijing Institute of Mathematical Sciences and Applications (BIMSA), Beijing 101408, China. E-mail address:baishuliang@bimsa.cn}, \quad Xiaonan Liu\thanks{Department of Mathematics, Louisiana State University, Baton Rouge, LA 70803, USA. Partially supported by the NSF-AWM Mentoring Travel Grant. E-mail address: xliu20@lsu.edu} \quad
 and \quad
  Zi-Xia Song\thanks{Department of Mathematics, University of Central Florida, Orlando, FL 32816, USA. Supported by the Simons Foundation through the Travel Support for Mathematicians program (SFI-MPS-TSM-00024413). E-mail address: Zixia.Song@ucf.edu}}

\date{}
\begin{document}
\maketitle
\begin{abstract}
 Let $G$ be a connected graph with minimum degree at least two, positive Lin-Lu-Yau Ricci curvature,  and  no $K_{2,t}$ minor, where  $t\ge 3$ is an integer.  We first establish tight upper bounds on the maximum degree of $G$ by showing that $\Delta(G)\le 2t+3$ for $t=4$,  $\Delta(G)\le 2t+4$ for $t\in\{5,6\}$, and $\Delta(G)\le 2t+2$ for all $t\ge7$.  We then  prove that $G$ has at most $10$ vertices for  $t=3$, and at most $(t+1)\left(\Delta(G)^2(\Delta(G)-1)/2+1\right)$ vertices for all  $t\ge 4$. 
 
 \end{abstract}

\baselineskip 18pt

\section{Introduction}

Ricci curvature plays an important role in the geometric analysis of Riemannian manifolds, linking local geometric information to global properties such as diameter, volume growth, and diffusion. Motivated by extending this local-to-global principle to discrete settings, several notions of discrete Ricci curvature have been introduced, including the Bakry--Émery curvature condition~\cite{Bakry-Emery1985} and Ollivier's coarse Ricci curvature on metric spaces~\cite{Ollivier}. In \cite{LLY}, Lin, Lu, and Yau modified Ollivier's Ricci curvature \cite{Ollivier} to introduce  a new variant of Ricci curvature on graphs, known as Lin--Lu--Yau Ricci curvature, which does not depend on the idleness of the random walk. 

All graphs considered in this paper are finite and simple. For a graph $G=(V(G), E(G))$, we   use $V(G)$ to denote the vertex set, $E(G)$ the edge set, $|G|$ the number of vertices, $e(G)$ the number of edges,   $\delta(G)$ the minimum degree, $\Delta(G)$ the maximum degree  and $\overline{G}$ the complement of $G$. For any positive integer $k$, we write  $[k]$ for the set $\{1,2, \ldots, k\}$. We use the convention   ``$A:=$'' to mean that $A$ is defined to be the right-hand side of the relation. 
For a graph $G$  and a vertex set $A\subseteq V(G)$, we define $N_G(A):=\{v\in V(G)\setminus A : va\in E(G) \text{ for some } a\in A\}$ and $N_G[A]:=N_G(A)\cup A$. 
When $A=\{a\}$, we simply write $N_G(a)$ and $N_G[a]$. Note that $d_G(a)=|N_G(a)|$. 
We use $d_G(x,y)$ to denote the distance between two vertices $x, y$ in $G$. More generally, for nonempty vertex sets $A,B\subseteq V(G)$, define $d_G(A,B):=\min\{d_G(a,b):a\in A,\ b\in B\}$, and let $e_G(A,B)$ denote the number of edges with one endpoint in $A$ and the other in $B$. When $A=\{a\}$, we write $d_G(a,B)$ and $e_G(a,B)$, respectively. If $G$ is clear from the context, we omit the subscript $G$. 

The \emph{union} $G\cup H$ of two graphs $G,H$ is the graph with vertex set $V(G)\cup V(H)$  and edge set $E(G)\cup E(H)$. The \emph{disjoint union} $G \sqcup H$ of $G, H$ is the union of vertex-disjoint copies of $G$ and $H$. For a positive integer $t$, we write $tG$ for the disjoint union of $t$ copies of $G$.
The {\dfn{join}} $G+H$ of two 
vertex disjoint graphs
$G$ and $H$ is the graph having vertex set $V(G)\cup V(H)$  and edge set $E(G)
\cup E(H)\cup \{xy\mid x\in V(G),  y\in V(H)\}$.

We use the same notation as in~\cite{LLY}.
A probability distribution (over the vertex set $V(G)$) is a mapping $m: V(G)\to [0,1]$ satisfying $\sum_{x\in V(G)} m(x) = 1$.
Suppose two probability distributions $m_1$ and $m_2$ have finite support. A {\em coupling} between $m_1$ and $m_2$ is a mapping $A: V(G)\times V(G) \to [0,1]$ with finite support so that 
$\sum_{y \in V(G)} A(x,y) = m_1(x) \textrm{ for every $x\in V(G)$ and } \sum_{x\in V(G)} A(x,y) = m_2(y)$ for every $y\in V(G)$. The \textit{Wasserstein-1 transportation distance} between two probability distributions $m_1$ and $m_2$ is defined as follows:
$$W(m_1, m_2) = \inf_A \sum_{x,y\in V(G)} A(x,y) d_G(x,y),$$
where the infimum is taken over all couplings $A$ between $m_1$ and $m_2$. A random walk $m$ on $G=(V,E)$ is defined as a family of probability measures $\{m_v(\cdot)\}_{v\in V(G)}$ such that $m_v(u) = 0$ for all $uv \notin E(G)$. It follows that  $m_v(u) \geq 0$ for all $v,u\in V(G)$ and $\sum_{u\in N[v]} m_v(u) = 1$ for all $v\in V(G)$. 

In~\cite{Ollivier}, Ollivier defined the \textit{coarse Ricci curvature} $\kappa: \binom{V(G)}{2} \to \mathbb{R}$ of a vertex pair in connected $G$ equipped with a random walk $m$ as follows: for distinct vertices $x, y\in V(G)$,  
$$\kappa_{\textrm{Ollivier}}(x,y) = 1 - \frac{W(m_x, m_y)}{d_G(x,y)}.$$
Note that $W(m_x, m_y)=W(m_y, m_x)$ for any distinct $x,y\in V(G)$, and hence $\kappa_{\textrm{Ollivier}}(x,y)=\kappa_{\textrm{Ollivier}}(y,x)$. One natural choice of random walks in graphs is the \textit{$\alpha$-lazy random walk} ($0\leq \alpha < 1$) defined as 
$$
m_x^{\alpha}(v) = \begin{cases} 
                        \alpha & \textrm{ if $v=x$,}\\
                        (1-\alpha)/d(x) &\textrm{ if $v\in N(x)$,}\\
                        0 & \textrm{ otherwise.}
                    \end{cases}
$$
In \cite{LLY}, Lin, Lu and Yau defined \emph{Lin--Lu--Yau Ricci curvature} (\emph{LLY curvature}, for short) of a vertex pair $\{x,y\}$ in connected $G$, denoted by $\kappa_{\textrm{LLY}}(x,y)$, based on the $\alpha$-lazy random walk in $G$ as $\alpha$ goes to $1$, i.e., $$\kappa_{\textrm{LLY}}(x,y) = \lim_{\alpha \to 1} \frac{\kappa_{\alpha}(x,y)}{(1-\alpha)},$$
where $\kappa_{\alpha}(x,y) = 1 - \frac{W(m_x^{\alpha}, m_y^{\alpha})}{d_G(x,y)}$. Similarly, we have that $\kappa_{\textrm{LLY}}(x,y)=\kappa_{\textrm{LLY}}(y,x)$ for any distinct $x,y\in V(G)$.
A connected graph $G$ is called \textit{positively Lin--Lu--Yau curved} or positively curved, for short, if $\kappa_{LLY}(x,y) >0$ for all $x \neq y \in V(G)$. Lin, Lu, and Yau~\cite{LLY} showed that if a connected graph has positive LLY curvature on every edge, then it has positive LLY curvature on every pair of distinct vertices. Consequently, a connected graph $G$ is positively curved if and only if every edge of $G$ has positive LLY curvature. Throughout this paper, all positively Lin--Lu--Yau curved graphs are assumed to be connected.

Let $U\subseteq V(G)$. A function $f:U\to\mathbb R$ is \emph{$1$-Lipschitz}, written $f\in Lip(1)$, if $|f(x)-f(y)|\le d_G(x,y)$ for all $x,y\in U$. M\"unch and Wojciechowski~\cite{MW} obtained a limit-free
characterization of Lin--Lu--Yau Ricci curvature in terms of the
normalized graph Laplacian $\LL$. Specifically, for any two
distinct vertices $x,y\in V(G)$, the LLY curvature of $\{x,y\}$ is the infimum of $\frac{\LL f(x) -\LL f(y)}{d(x,y)}$ taken over all $1$-Lipshchitz functions $f$ on $V(G)$ with $f(y)-f(x)=d(x,y)$. Moreover, this infimum is equivalently taken over all integer-valued $1$-Lipschitz functions on $N[x]\cup N[y]$ with $f(y)-f(x)=d_G(x,y)$.
\begin{thm}[M\"{u}nch and Wojciechowski~\cite{MW}]\label{thm:curvature_laplacian} Let $G= (V,E)$ be a connected graph and let $x \neq y \in V(G)$. Then 

\begin{equation*}
    \kappa_{\textrm{LLY}}(x,y) = \inf_{\substack{f: V(G) \to \mathbb{R}\\ f\in Lip(1)\\ \nabla_{yx}f = 1}} \nabla_{xy} \LL f  = \inf_{\substack{f: N[x]\cup N[y] \to \mathbb{Z} \\f \in Lip(1)\\ \nabla_{yx}f = 1}} \nabla_{xy} \LL f,
\end{equation*}
where $\nabla_{xy}f=\frac{f(x)-f(y)}{d(x,y)}$ and $\LL f(u)=\frac{1}{d(u)}\sum_{v\in N(u)} (f(v)-f(u))$. 
\end{thm}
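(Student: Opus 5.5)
We prove the first equality by Kantorovich duality combined with a first-order expansion in $1-\alpha$, and the second by reducing to integer-valued functions on the ball.

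\textbf{Step 1: duality and expansion.} Since $m_x^\alpha,m_y^\alpha$ have finite support, Kantorovich--Rubinstein duality gives
\[
W(m_x^\alpha,m_y^\alpha)=\sup_{f\in Lip(1)}\Big(\sum_v f(v)m_y^\alpha(v)-\sum_v f(v)m_x^\alpha(v)\Big).
\]
For any function $f$ we have $\sum_v f(v)m_x^\alpha(v)=f(x)+(1-\alpha)\LL f(x)$. Hence
\[
W(m_x^\alpha,m_y^\alpha)=\sup_{f\in Lip(1)}\Big(f(y)-f(x)+(1-\alpha)\big(\LL f(y)-\LL f(x)\big)\Big).
\]
Put $d=d(x,y)$. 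Then
\[
\kappa_\alpha(x,y)/(1-\alpha)=\inf_{f}\Big[\frac{d-(f(y)-f(x))}{(1-\alpha)d}+\nabla_{xy}\LL f\Big].
\]
Choosing $f$ with $f(y)-f(x)=d$ gives $\kappa_\alpha/(1-\alpha)\le \inf_{\nabla_{yx}f=1}\nabla_{xy}\LL f$ for every $\alpha$, which is the upper bound for the limit.

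\textbf{Step 2: lower bound.} Only the values of $f$ on $N[x]\cup N[y]$ matter. Normalize $f(x)=0$; after subtracting a constant, these values lie in a compact set. The quantity $\LL f(y)-\LL f(x)$ is bounded by $2$ for any $1$-Lipschitz $f$. So if $f(y)-f(x)<d-\varepsilon$, the first term is at least $\varepsilon/((1-\alpha)d)\to\infty$. Therefore near-optimal $f_\alpha$ must satisfy $f_\alpha(y)-f_\alpha(x)\to d$. By compactness, pass to a limit $f$ that is $1$-Lipschitz with $\nabla_{yx}f=1$; since the first term is nonnegative, the limit of the infima is at least $\nabla_{xy}\LL f$. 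Every $1$-Lipschitz function on $N[x]\cup N[y]$ (with the metric $d_G$) extends to a $1$-Lipschitz function on $V$ by McShane's extension $\min_u (f(u)+d(u,\cdot))$. This gives the first equality, and also shows the limit exists.

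\textbf{Step 3: integer-valued functions (main obstacle).} The objective $\nabla_{xy}\LL f$ is linear in $f$. The feasible set is the polytope in $\mathbb{R}^{N[x]\cup N[y]}$ given by $f(x)=0$, $f(y)=d$ and the constraints $f(u)-f(v)\le d(u,v)$. All of these are difference constraints with integer right-hand sides, so the constraint matrix is totally unimodular (it is a network matrix). Hence every vertex of the polytope is integral. The polytope is bounded once $f(x)=0$ is fixed, so the infimum of a linear function is attained at a vertex, which is an integer-valued function. This gives the second equality.

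The delicate point is the limit/compactness argument in Step 2, which the total unimodularity of Step 3 makes clean.
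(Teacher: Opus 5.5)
The paper gives no proof of this statement. It is imported from M\"unch--Wojciechowski, where it is stated for general graph Laplacians, and the reduction to integer-valued functions on $N[x]\cup N[y]$ is quoted along with it.

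Your argument is a correct, self-contained proof in the finite unweighted setting the paper works in. The checks are as follows:
\begin{itemize}
\item The identity $\sum_v f(v)m_x^\alpha(v)=f(x)+(1-\alpha)\LL f(x)$, together with Kantorovich--Rubinstein duality, turns $\kappa_\alpha(x,y)/(1-\alpha)$ into a penalized problem. The penalty is nonnegative and vanishes exactly when $\nabla_{yx}f=1$.
\item The bound $|\LL f|\le 1$ for $1$-Lipschitz $f$ forces near-optimal $f_\alpha$ to satisfy $f_\alpha(y)-f_\alpha(x)\to d$, so the upper and lower bounds match. Phrase the lower bound with $\liminf$ along a sequence $\alpha_n\to 1$, since existence of the limit is an output of the argument, not an input.
\item McShane extension lets you pass between $V(G)$ and $N[x]\cup N[y]$.
\item The constraints $f(u)-f(v)\le d(u,v)$, plus the unit rows for $f(x)=0$ and $f(y)=d$, form a totally unimodular system with integral right-hand side. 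The polytope it defines is bounded and nonempty (it contains $d(x,\cdot)$), so a linear objective is minimized at an integral vertex.
\end{itemize}

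Your closing remark is vague as written, and it is worth making precise because it is where your route gives more than the citation. Total unimodularity also applies to $Q=\{f:\ f(x)=0,\ f\in Lip(1)\text{ on } N[x]\cup N[y]\}$. For fixed $\alpha$ the penalized objective is affine in $f$, so it is minimized at one of finitely many integral vertices of $Q$. Any such vertex with $f(y)-f(x)\le d-1$ has value at least $\frac{1}{(1-\alpha)d}-\frac{2}{d}$. Once $\alpha>3/4$, this exceeds $\frac{2}{d}$, which bounds $\nabla_{xy}\LL g$ for every $g$ on the face $\{f(y)=d\}$.

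Three consequences follow:
\begin{itemize}
\item $\kappa_\alpha(x,y)/(1-\alpha)$ equals the infimum exactly for all $\alpha>3/4$.
\item The compactness step in your Step 2 becomes unnecessary.
\item The infimum is a minimum over finitely many integer-valued functions for every finite graph. This is the attainment the paper later invokes, via the bound on $\Delta(G)$, to get $\kappa_{\textrm{LLY}}(x,y)\ge 1/\lcm(d(x),d(y))$; your argument shows no degree bound is needed for it.
\end{itemize}
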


Motivated by the study of planar graphs with positive combinatorial curvature, Lu and Wang~\cite{LuWangPlanarCurvature2023} initiated the study of the number of vertices  of positively Lin--Lu--Yau curved planar graphs. They proved that every positively curved planar graph with minimum degree at least three has bounded maximum degree and hence is finite. The minimum degree assumption is necessary, since $K_{2,t}$ is positively curved for every $t$. Brooks, et al.~\cite{BOSWY} characterized all positively curved maximal outerplanar graphs with minimum degree two. Building on their work, Liu, Lu and Wang \cite{LiuLuWang2024} subsequently characterized  all positively curved outerplanar graphs with minimum degree two and showed that every such graph has at most $10$ vertices. Again, the minimum degree condition is also necessary, since $K_{1,t}$ is positively curved for every $t$. We refer the reader to~\cite{GamlathLiuLuYuan2025,LinYou2026,ChenLinLiuYou2026,Hehl2026,BrooksJohnsonLinzLiuLuVaughn2026,RamosOlive2025,ChenHuLiuZhang2026,HuangLiuXia2024,LiLu2024}  for further results on this topic.

Motivated by these results on outerplanar graphs, we consider the broader class of graphs with no $K_{2,t}$ minor. We extend the result of Liu, Lu, and Wang by characterizing all positively curved nonouterplanar graphs with minimum degree two and no $K_{2,3}$ minor. For every $t\ge 4$, we first determine a sharp upper bound on the maximum degree of positively curved graphs with minimum degree at least two and no $K_{2,t}$ minor, and use this bound to show that, for each fixed $t$, every such graph is finite. 

\begin{thm}\label{t:maxdeg}
Let $t\ge4$ be an integer and let $G$ be a connected graph with $\delta(G)\ge2$,    positive Lin--Lu--Yau curvature, and no $K_{2,t}$ minor. Then
\[
\Delta(G)\le
\begin{cases}
2t+3 & \quad\text{ if } \quad t=4,\\
2t+4& \quad \text{ if }\quad t=5,6\\
2t+2 & \quad \text{ if }\quad t\ge7.
\end{cases}
\]
Each bound is sharp, even among graphs of minimum degree at least three.
\end{thm}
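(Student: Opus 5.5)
Fix a vertex $x$ with $d(x)=\Delta:=\Delta(G)$ and write $N:=N(x)$. The plan is to obtain two inequalities: from the absence of a $K_{2,t}$ minor, every vertex outside $N[x]$ has few neighbours in $N$, and $N$ is sparse inside; from positive curvature of each edge $xy$ with $y\in N$, many neighbours of $y$ must be matched with neighbours of $x$ at distance at most one. Balancing these should give the stated bounds.

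\textbf{Step 1 (structure from the excluded minor).} Each component $C$ of $G-N[x]$ can be contracted to a single vertex. Together with $x$ it then forms the two hubs of a $K_{2,s}$, where $s=|N(C)\cap N|$. Hence $|N(C)\cap N|\le t-1$. The same contraction shows that every vertex of $N$ has at most $t-1$ neighbours inside $G[N]$; otherwise $x$ and $y$ are hubs with $t$ common neighbours. More generally, $G[N]$ has no structure that, together with $x$ and a contracted path, creates $t$ disjoint connectors. This yields a bound on the matching-type structure between $N$ and $N_2:=N(N)\setminus N[x]$: each outside component ``sees'' at most $t-1$ vertices of $N$.

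\textbf{Step 2 (curvature at edges $xy$).} For $y\in N$, I will use \cref{thm:curvature_laplacian} with an explicit integer 1-Lipschitz $f$ on $N[x]\cup N[y]$ with $f(y)-f(x)=1$. Set $f(x)=0$ and $f(y)=1$. Put $f=-1$ on the neighbours of $x$ that are not adjacent to $y$ and are not within distance one of the ``private'' neighbours of $y$. Put $f=2$ on the neighbours of $y$ at distance $2$ from $x$ that have no neighbour in $N(x)$ other than via the allowed pattern. All other vertices get $f$ equal to $0$ or $1$. Then $\kappa(x,y)>0$ forces roughly
\[
\frac{|\{u\in N: f(u)=-1\}|}{\Delta}\;<\;\frac{\#\{\text{triangles}\}+\#\{\text{good matches}\}}{d(y)}+\frac{2}{\,\cdot\,},
\]
that is, $\mathcal L f(x)-\mathcal L f(y)>0$. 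Since $\delta(G)\ge2$, each $y$ has a neighbour $w\ne x$. When $w\notin N$, the vertices of $N\setminus N[y]$ that are not adjacent to $w$ must get $f=-1$. Positivity then says that at least about $\Delta-O(1)$ of them must be adjacent to $N(y)\setminus N[x]$ or to $y$. By Step 1, each outside component covers at most $t-1$ of them, and $y$ itself covers at most $t-1$.

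\textbf{Step 3 (counting).} Combining the two steps for a low-degree $y\in N$ (degree $2$ or $3$ gives the strongest constraint) bounds $\Delta$ by about $2(t-1)+c$. The constant $c$ depends on how the two ``covering'' sets, the common neighbours and the outside component, can overlap. I expect the main obstacle to be exactly this careful case analysis. The overlap and the fractional terms $1/\Delta$ versus $1/d(y)$ produce the irregular small cases $t=4,5,6$, where a few extra units are possible, while for $t\ge7$ the slack disappears and $\Delta\le2t+2$. I would first try to prove $\Delta\le 2t+4$ uniformly, then refine for $t\ge7$ and $t=4$ by examining when equality-type configurations force a $K_{2,t}$ minor via a longer contracted path.

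\textbf{Step 4 (sharpness).} Exhibit explicit graphs, for example $x$ joined to a suitably structured $N$ such as a union of $K_{t-1}$-like pieces plus two outside hubs. Verify positive curvature edge by edge via \cref{thm:curvature_laplacian}, check that there is no $K_{2,t}$ minor, and check that the minimum degree is at least $3$.
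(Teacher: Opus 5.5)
\textbf{There is a genuine gap, in Steps~2--3.} Nothing in your plan gives a bound of the form $2t+O(1)$ unless the maximum-degree vertex $x$ has a neighbour of degree $2$ (or at most $3$), and $\delta(G)\ge2$ does not supply one.

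Step~2 first needs to be made precise. Your displayed inequality still contains a placeholder. Your $f$, as described, need not be $1$-Lipschitz: a vertex with $f=-1$ must be at distance at least $3$ from every vertex with $f=2$, not merely ``not within distance one''. The natural test function is $f=\min\{d(\cdot,A),2\}$ with $A:=N[y]\setminus\{x\}$ (so $f(y)=0$, $f(x)=1$), and it gives
\[
\Delta\Bigl(1-\frac{1}{d(y)}\Bigr)<2c+2+b,\qquad c:=|N(x)\cap N(y)|,\quad b:=|N(A)\cap N(x)|.
\]
The bound $b\le t-1$ requires contracting $A$ as a whole. Your per-component bound is not enough, since $y$ may reach several components of $G-N[x]$.

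So positivity does not force ``$\Delta-O(1)$'' vertices of $N$ to be covered. When $d(y)=2$ the left side is only $\Delta/2$, which gives $\Delta\le 2t+3$ (still not $2t+2$), and common neighbours count twice on the right. If every $y\in N$ has degree about $t$ and $c$ close to $t-1$, the inequality only gives $\Delta\lesssim 3t$, and your Step~3 has no mechanism for that case. This case is not degenerate: in the extremal graphs $K_1+2\overline{C_{t+1}}$, every neighbour of the centre has degree $t-1$ and $c=t-2$.

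\textbf{What is missing is a global argument over all of $N(x)$ at once.} The paper's route is as follows.
\begin{enumerate}[(1)]
\item It keeps the inequality above for \emph{every} $v\in N(u)$.
\item It contracts each vertex at distance $2$ from $u$ into a neighbour in $N(u)$. This gives a graph $H$ on $N(u)$ with $K_1+H$ a minor of $G$, so by Lemma~\ref{lem:cone-minors_1}, $H$ has neither a $K_{1,t}$ nor a $K_{2,t-1}$ minor.
\item Using the extra estimate $r-c+D/(r+1)\ge D/d$, every vertex of $H$ satisfies $|H|\,r/(r+1)<2r+2+b_H(v)$ with $r=d_H(v)$ (Lemma~\ref{lem:key}).
\item The contradiction is then extracted from $H$ as a whole (Lemma~\ref{lem:H}):
\begin{itemize}
\item the inequality pins down vertices of $H$-degree $1$ and $2$;
\item the path structure of $H-N[A]$ when $|N(A)|=t-1$ (Lemma~\ref{lem:pathcomp}) bounds $|H|$ when $\delta(H)\le 2$;
\item disconnected $H$ is excluded;
\item for $\delta(H)\ge3$, the max-leaf spanning-tree theorems of Kleitman--West and Griggs--Wu, combined with the inequality at a minimum-degree vertex, give a spanning tree with at least $t$ leaves, hence a $K_{1,t}$ minor (Lemma~\ref{lem:cone-minors}).
\end{itemize}
\end{enumerate}
That last ingredient is exactly what handles the high-degree neighbourhoods your plan leaves open. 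The irregular constants for $t=4,5,6$ also come out of this analysis of $H$, not from overlaps of covering sets; an example is the bound $|H|<(3t+15)/2$ forced at a degree-$2$ vertex of $H$.

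Without a substitute for this step, and with the case analysis of Step~3 explicitly deferred, the stated bounds are not proved. Step~4 also names no examples. The paper's are the cones $K_1+C_{11}$, $K_1+2F$, $K_1+2C_8^2$ and $K_1+2\overline{C_{t+1}}$, which have no vertices outside $N[x]$.
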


The bounds are sharp, as witnessed by the following examples: $K_1+C_{11}$ for $t=4$, $K_1+2F$ for $t=5$, $K_1+2C_8^2$ for $t=6$, and $K_1+2\overline{C_{t+1}}$ for $t\ge 7$, where $F$ is a $7$-vertex graph and $C_8^2$ denotes the square of $C_8$. See Figure~\ref{fig:F-C8-square}. It is also worth mentioning that the minimum degree two condition is necessary,  as   every star $K_{1,m}$ has positive Lin--Lu--Yau curvature.

\begin{figure}[htbp]
    \centering

    \resizebox{0.175\textwidth}{!}{
    \begin{tikzpicture}[
        scale=1.05,
        vertex/.style={circle, fill=black, inner sep=2.7pt}
    ]
        \node[vertex] (a) at (90:2) {};
        \node[vertex] (b) at (90:1) {};
        \node[vertex] (c) at (0:0) {};

        \node[vertex] (x) at (210:1) {};
        \node[vertex] (y) at ( -30:1) {};

        \node[vertex] (u) at (210:2) {};
        \node[vertex] (v) at (-30:2) {};

        \draw[thick] (a)--(u)--(v)--(a);

        \draw[thick] (a)--(b)--(c);
        \draw[thick] (c)--(x)--(u);
        \draw[thick] (c)--(y)--(v);

        \node at (0,-1.5) {$F$};
    \end{tikzpicture}}
    \hspace{1.5cm}
    \resizebox{0.175\textwidth}{!}{\begin{tikzpicture}[
        scale=1.05,
        vertex/.style={circle, fill=black, inner sep=2.7pt}
    ]
        \foreach \i in {0,...,7} {
            \node[vertex] (v\i) at ({90-45*\i}:2cm) {};
        }

        \draw[thick]
            (v0)--(v1)--(v2)--(v3)--(v4)--(v5)--(v6)--(v7)--(v0);

        \draw[thick]
            (v0)--(v2)
            (v1)--(v3)
            (v2)--(v4)
            (v3)--(v5)
            (v4)--(v6)
            (v5)--(v7)
            (v6)--(v0)
            (v7)--(v1);

        \node at (0,-2.55) {$C_8^2$};
    \end{tikzpicture}}

    \caption{The graphs $F$ and $C_8^2$.}
    \label{fig:F-C8-square}
\end{figure}

\begin{restatable}{thm}{vertexbound}\label{t:main}
Let $t\ge 3$ be an integer and   let $G$ be a connected graph with $\delta(G)\ge2$,    positive Lin--Lu--Yau curvature, and no $K_{2,t}$ minor. Then  
\[
 |G|\le
\begin{cases}
10& \quad \text{ if }\quad t=3,\\
(t+1)\left(\frac{\Delta(G)^2(\Delta(G)-1)}{2}+1\right) & \quad \text{ if } \quad t\ge 4. \end{cases}
\]
\end{restatable}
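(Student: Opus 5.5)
The plan rests on one structural fact and one metric fact. The structural fact: a graph with no $K_{2,t}$ minor cannot have two vertices with $t$ internally disjoint paths between them. The metric fact: positive LLY curvature forces small diameter, and more precisely it forces every vertex at distance $2$ from $x$ to lie on many short paths through $N(x)$. Concretely, I would bound $|G|$ by $(t+1)$ times the size of a ``ball-like'' set. The factor $\frac{\Delta^2(\Delta-1)}{2}+1$ should count a vertex $x$ together with the vertices reachable by specified short paths: pairs of neighbours $\binom{\Delta}{2}$, times further branching of at most $\Delta(\Delta-1)$. I would also use Theorem~\ref{t:maxdeg} so that $\Delta$ is bounded.

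\textbf{Step 1: a local curvature lemma.} Fix an edge $xy$ and apply Theorem~\ref{thm:curvature_laplacian} with an integer 1-Lipschitz $f$ on $N[x]\cup N[y]$. Set $f=0$ on common neighbours, $f=1$ at $y$, $f=-1$ at $x$, and choose $f$ on the private neighbours to minimize $\nabla_{xy}\LL f$ subject to the Lipschitz constraints. Positivity of $\kappa(x,y)$ then gives a matching-type condition. Neighbours of $x$ not adjacent to $y$ must be matched, at distance at most $1$, into $N[y]$, except for a deficit controlled by common neighbours. Since $\delta\ge 2$, this yields that every vertex has a short cycle structure around it.

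\textbf{Step 2: diameter.} LLY-positive graphs with the Bonnet--Myers-type estimate have bounded diameter. However, the curvature lower bound need not be uniform, so instead I would argue directly with Step 1. Take a BFS from a vertex $x$ with layers $L_i$. For $u\in L_i$ with $i\ge 3$, positivity on the edge $uw$ (with $w\in L_{i-1}$) forces the neighbours of $u$ in $L_{i+1}$ to be few relative to back neighbours. Combined with no $K_{2,t}$ minor, I would aim to show that $L_{\ge 3}$ splits into at most $t$ ``pieces''. Each piece attaches through a bounded region, giving the factor $t+1$. Contracting the ball $N[x]\cup L_2$ to a vertex and taking $t$ pieces each seeing a common far vertex would give a $K_{2,t}$ minor.

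\textbf{Step 3: counting.} Each piece, and the central part, lies within distance $2$ of a vertex along constrained paths. This gives at most $\Delta\cdot\frac{\Delta-1}{2}\cdot\Delta+1$ vertices each, and summing over $t+1$ parts gives the bound.

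\textbf{Case $t=3$.} Graphs with no $K_{2,3}$ minor are outerplanar or $K_4$ or specific 2-connected pieces glued at cut vertices. Blocks glued at cut vertices are handled by showing that a cut vertex kills positive curvature when $\delta\ge2$. The outerplanar case is the Liu--Lu--Wang result ($\le 10$). The remaining nonouterplanar $K_{2,3}$-minor-free graphs (built from $K_4$ by limited structure) I would check directly with Theorem~\ref{thm:curvature_laplacian}.

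\textbf{Main obstacle.} Step 2 is the hard part: turning edge-local curvature positivity into the global statement that far layers form at most $t$ pieces. I would first try a minimal-counterexample argument. Pick a vertex $z$ at maximal distance. If many disjoint geodesic-ending branches exist, construct $t$ internally disjoint paths from $z$ to the contracted centre, contradicting the excluded minor. Curvature at the last edge of each geodesic is what rules out long thin branches.
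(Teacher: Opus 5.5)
\paragraph{Part $t\ge4$: Step 2 is asserted, not proved.} Step 2 carries the whole argument, and nothing you derive supports it. Step 1 and the excluded minor do not show that $L_{\ge3}$ splits into at most $t$ pieces, nor that each piece lies within distance $2$ of a single vertex. A $K_{2,t}$ minor needs $t$ disjoint connected sets that all touch the same two disjoint connected sets. Forbidding it does not bound the number of components of $G-(N[x]\cup L_2)$ or their radius.

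Step 3 is reverse-engineered from the target. A radius-$2$ ball has at most $\Delta^2+1$ vertices, and your two descriptions of the count, $\binom{\Delta}{2}\cdot\Delta(\Delta-1)$ and $\Delta\cdot\frac{\Delta-1}{2}\cdot\Delta$, disagree with each other.

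The missing idea is that the bound is spectral, and it rests on the uniform curvature bound you set aside:
\begin{enumerate}[(1)]
\item Once $\Delta$ is bounded by Theorem~\ref{t:maxdeg}, Theorem~\ref{thm:curvature_laplacian} makes the infimum a minimum over integer-valued $f$. So $\kappa_{\textrm{LLY}}(x,y)$ is a positive rational with denominator dividing $\lcm(d(x),d(y))$, giving $\kappa_{\textrm{LLY}}\ge 1/(\Delta(\Delta-1))$ on every edge.
\item Theorems~\ref{t:curvature_to_spectral} and~\ref{t:spectral} then give $\lambda_2\ge\delta\mu_2\ge 2/(\Delta(\Delta-1))$.
\item Deleting a degree-$t$ vertex of $K_{2,t}$ leaves a star, so Theorem~\ref{thm:apex-forest} gives $\tw(G)\le t$.
\item Theorem~\ref{thm:spectral-treewidth} rearranges to $|G|\le(t+1)(\Delta/\lambda_2+1)$, which is the stated bound.
\end{enumerate}
Thus $t+1$ is $\tw(G)+1$ and $\Delta^2(\Delta-1)/2$ bounds $\Delta/\lambda_2$; neither counts vertices of a decomposition. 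Even combining this uniform bound with the Lin--Lu--Yau diameter bound $\mathrm{diam}\le 2/\kappa_{\min}$ gives only an exponential vertex bound.

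\paragraph{Part $t=3$: the cut-vertex claim is false.} Your claim that a cut vertex is incompatible with positive curvature when $\delta\ge2$ does not hold, so you cannot reduce the non-outerplanar case to $K_4$. Take $K_1+2K_3$, two copies of $K_4$ sharing a vertex $v$. For a neighbour $a$ of $v$, the optimal test function gives $\kappa_{\textrm{LLY}}(v,a)=1/3$, and every edge avoiding $v$ has curvature $4/3$. The graphs $K_1+(K_3\sqcup 2P_2)$ and $K_1+(K_3\sqcup P_\ell)$ for $2\le\ell\le5$ are also positively curved with a cut vertex, as is the outerplanar bowtie $K_1+2P_2$.

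What you need is control of $K_4$-blocks, as in Lemma~\ref{lem:K4_block}:
\begin{enumerate}[(1)]
\item By Lemma~\ref{Lem:K23-minor}, a non-outerplanar $G\ne K_4$ has a $K_4$-block $B$.
\item A test function on an edge joining two cut vertices of $B$ forces curvature $\le0$, so $B$ has at most one cut vertex $v$.
\item A second test function gives $d_G(v)\le8$.
\item Every other block at $v$ is $K_4$ or a fan centred at $v$.
\end{enumerate}
This yields the list in Theorem~\ref{thm:K_23} with $|G|\le9$, and Liu--Lu--Wang covers the outerplanar case.
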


\medskip

We make no effort to improve the upper bounds in Theorem~\ref{t:main} for $t\ge4$.  
The remainder of the paper is organized as follows. Section~\ref{s:K_1t} presents several results on graphs with no $K_{1,t}$ minor. We prove Theorem~\ref{t:maxdeg} in Section~\ref{s:maxdeg}, Theorem~\ref{t:main} for $t=3$ in Section~\ref{s:t=3}, and Theorem~\ref{t:main} for all $t\ge4$ in Section~\ref{s:t=4}.

  \section{Structural properties of graphs with no \texorpdfstring{$K_{1,t}$}{K(1,t)} minor}\label{s:K_1t}
  
 Throughout this section, let  $t\ge 2$ be an integer.  We  present here  the    properties of graphs with no $K_{1,t}$ minor that  are used in the proofs of our main results.

\begin{lem}\label{lem:cone-minors_1}
 The graph $K_1+ H$ has no $K_{2,t}$ minor if and only if $H$ contains  neither   $K_{1,t}$  nor   $K_{2,t-1}$ as a  minor.
\end{lem}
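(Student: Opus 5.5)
We prove both directions separately, writing $v$ for the apex vertex of $K_1+H$.

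\emph{Necessity.} Suppose $H$ has a $K_{1,t}$ minor, with branch sets $B_0$ (the centre) and $B_1,\dots,B_t$ (the leaves). Then $\{v\}$, $B_0, B_1,\dots,B_t$ are branch sets of a $K_{2,t}$ minor in $K_1+H$: the two vertices of the side of size two are $\{v\}$ and $B_0$, and $v$ is adjacent to every $B_i$. Now suppose $H$ has a $K_{2,t-1}$ minor with branch sets $A_1,A_2$ and $C_1,\dots,C_{t-1}$. Adding $C_t=\{v\}$, which is adjacent to everything, gives a $K_{2,t}$ minor of $K_1+H$. So if $K_1+H$ has no $K_{2,t}$ minor, then $H$ has neither minor.

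\emph{Sufficiency.} Suppose $K_1+H$ has a $K_{2,t}$ minor. Pick a model with branch sets $X_1,X_2$ (the side of size two) and $Y_1,\dots,Y_t$. Each set is connected, the sets are pairwise disjoint, and each $X_i$ touches each $Y_j$. We may assume $v$ lies in some branch set, since otherwise the model already lies in $H$ and contains a $K_{2,t-1}$ minor. There are two cases.

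\emph{Case 1: $v\in Y_j$, say $j=t$.} Discard $Y_t$. The sets $X_1,X_2,Y_1,\dots,Y_{t-1}$ avoid $v$, so they lie in $H$, are connected in $H$, and the required adjacencies are edges of $H$. This is a $K_{2,t-1}$ minor of $H$.

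\emph{Case 2: $v\in X_1$.} Then $X_2$ and all $Y_j$ lie in $H$ and are connected there. Each $Y_j$ is adjacent in $H$ to $X_2$, because that adjacency avoids $v$. The sets $X_2,Y_1,\dots,Y_t$ are pairwise disjoint and connected in $H$, with $X_2$ adjacent to every $Y_j$. Hence they form a $K_{1,t}$ minor of $H$ with centre $X_2$. Notice that the set $X_1$ containing $v$ is not used at all.

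The one delicate point is connectivity of branch sets after deleting $v$. Removing $v$ could disconnect the set containing it, but in each case that set is either discarded ($Y_t$ in Case 1) or unused ($X_1$ in Case 2). Every other set avoids $v$, so its connectivity and adjacencies hold in $H$. Minors are taken up to edge contraction and deletion, so branch-set models suffice. Combining the two directions proves the lemma.
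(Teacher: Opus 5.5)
Your proof is correct and complete. The paper omits the proof of this lemma, and your branch-set argument is the routine one that the omission takes for granted: in one direction the apex $v$ serves as an extra hub or an extra leaf, and in the other you split on which branch set contains $v$, discarding it if it is a $Y_j$ and ignoring it if it is an $X_i$. This correctly handles the only delicate point, that the set containing $v$ may be disconnected in $H$, since that set is never reused.
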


We omit the proof of Lemma~\ref{lem:cone-minors_1}.  Given a graph $G$, a vertex set $A\subseteq V(G)$ is \dfn{connected} if $G[A]$ is connected.  A graph $G$ has a $K_{1,t}$ minor precisely when there exists a connected vertex set $A$ in $G$ such that $|N_G(A)|\ge t$.  In particular, if $G$ has no $K_{1,t}$ minor, then for each vertex $v\in V(G)$ and every non-empty connected set $A\subseteq V(G)$, we have 
\begin{equation}\label{eq:cone-boundary}
d_G(v)\le t-1  \text{ and } |N_G(A)|\le t-1.
\end{equation}

\begin{lem}\label{lem:pathcomp}
Let $G$ be a connected graph with  no $K_{1, t}$ minor. If $A\subseteq V(G)$ is a connected set  with $|N(A)|=t-1$, then 
\begin{enumerate}[(a)]
\item each vertex in $N_G(A)$ is adjacent to at most one vertex in $G\setminus N_G[A]$, and 
\item each component of  $G\setminus N_G[A]$ is a path such that no internal vertex on the path  is adjacent to any vertex in $N_G(A)$. 
\end{enumerate}
\end{lem}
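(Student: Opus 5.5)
Both parts rest on one principle, which I will use repeatedly: by the characterization preceding \eqref{eq:cone-boundary}, enlarging $A$ by a connected piece of $G\setminus N_G[A]$ and absorbing part of $N_G(A)$ is not allowed to create a connected set with at least $t$ neighbours. So each part reduces to exhibiting a connected set $A'\supseteq A$ with $|N_G(A')|\ge t$ whenever the claimed structure fails.

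\emph{Part (a).} Suppose some $u\in N_G(A)$ has two distinct neighbours $w_1,w_2\in G\setminus N_G[A]$. Set $A':=A\cup\{u\}$, which is connected because $u$ has a neighbour in $A$. Then
\[
N_G(A') \supseteq \bigl(N_G(A)\setminus\{u\}\bigr)\cup\{w_1,w_2\},
\]
and $w_1,w_2\notin N_G(A)$. Hence $|N_G(A')|\ge (t-2)+2=t$, contradicting \eqref{eq:cone-boundary}.

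\emph{Part (b), degree bound.} Let $C$ be a component of $G\setminus N_G[A]$. Since $G$ is connected, some $u\in N_G(A)$ has a neighbour $c\in C$. I first show every vertex $v\in C$ has at most two neighbours in $C$. Take the set $A\cup\{u\}\cup P$, where $P$ is a path in $C$ from $c$ to $v$. This set is connected. Its neighbourhood contains $N_G(A)\setminus\{u\}$, of size $t-2$, together with all neighbours of $P$ inside $C$ that lie off $P$. The cleanest route is to add vertices one at a time and track the neighbourhood size, as follows. For $B$ connected with $A\cup\{u\}\subseteq B$ and $B\setminus(A\cup\{u\})\subseteq C$, write
\[
N(B)=\bigl(N_G(A)\setminus\{u\}\bigr)\cup\bigl(N(B)\cap C\bigr),
\]
where the first part excludes any vertices of $N_G(A)$ already placed in $B$. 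So $|N(B)\cap C|\le 1$ is forced. For $B=A\cup\{u\}$ this already gives that $u$ has exactly one neighbour in $C$, recovering (a). Now grow $B$ along $C$. Each time we add the unique $C$-neighbour, the new vertex contributes its $C$-neighbours outside $B$, and there can be at most one of them. Thus $C$ is traversed as a path $c=c_1,c_2,\dots,c_k$, in which each $c_i$ has no $C$-neighbours other than $c_{i-1}$ and $c_{i+1}$. This shows $G[C]$ is a path, since connectedness plus this greedy exploration exhausts $C$.

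\emph{Part (b), internal vertices, and the main obstacle.} Suppose an internal vertex $c_i$ ($1<i<k$) is adjacent to some $u'\in N_G(A)$. If $u'\neq u$, take $B=A\cup\{u\}\cup\{c_1,\dots,c_i\}$. Its neighbourhood contains $N_G(A)\setminus\{u\}$ (still size $t-2$, and still including $u'$) plus $c_{i+1}$, which gives only $t-1$. So I change the root and use the chain from the other side: take $B=A\cup\{u'\}$. By (a), $c_i$ is the unique $C$-neighbour of $u'$. Then $N(B\cup\{c_i\})$ contains $N_G(A)\setminus\{u'\}$ together with both $c_{i-1}$ and $c_{i+1}$, of size $t$, a contradiction. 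If $u'=u$, the choice of the starting vertex $c$ must be revisited: $c=c_1$ is $u$'s unique $C$-neighbour by (a), so $c_i=c_1$ and $c_i$ is an endpoint after all. This is simply the same argument with endpoint labelling fixed. The main obstacle I expect is exactly this bookkeeping: making sure that vertices of $N_G(A)$ absorbed into $B$ are not double counted, and that the path ordering is consistent with the chosen root. Handling it via the rerooting trick above should suffice.
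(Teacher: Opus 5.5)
Your proof is correct and follows essentially the paper's approach: part (a) is identical, and part (b) likewise comes from enlarging $A$ by a vertex of $N_G(A)$ together with vertices of the component and counting neighbours. Your greedy growth of $B$ along $C$, and your use of $A\cup\{u',c_i\}$ for internal vertices, spell out the iteration that the paper's proof leaves implicit when it examines only $y$ and $z$ and then says ``it follows''. The case split $u'=u$ versus $u'\neq u$ is unnecessary, since $A\cup\{u',c_i\}$ has the $t$ neighbours $(N_G(A)\setminus\{u'\})\cup\{c_{i-1},c_{i+1}\}$ in either case.
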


\begin{proof} Let $G$ and $A$ be given as in the statement. Let $B:=V(G)\setminus N_G[A]$.  Suppose some vertex, say $x$, in $N_G(A)$ is adjacent to at least two vertices in $B$. Then $A\cup\{x\}$ is connected and $|N_G(A\cup\{x\})|\ge t$, contrary to  \eqref{eq:cone-boundary}. Thus each vertex in $N_G(A)$ is adjacent to at most one vertex in $B$.  Let $D$ be a component of $G[B]$. We may assume that $|D|\ge3$, else we are done. Since $G$ is connected, we see that $e_G(V(D),  N_G(A))>0$. Let  $y\in V(D)$  such that $y$ is adjacent to some vertex, say $x$, in $N_G(A)$. Then $d_{D}(y)=1$, else $A\cup\{x, y\}$ is connected and $|N_G(A\cup\{x,y\})|\ge t$, contrary to  \eqref{eq:cone-boundary}. Let $z$ be the unique neighbor of $y$ in $D$. Since $|D|\ge3$ and $D$ is connected, there exists $w\in V(D)$ with $w\ne y$ such that $zw\in E(G)$. Then   $z$ is adjacent to exactly two vertices $y, w$ in $D$ and $z$ is not adjacent to any vertex in $N_G(A)$. It follows that $D$ is a path  such that no   internal vertex on the path  is adjacent to any vertex in $N_G(A)$.  \end{proof}  
  
  \begin{lem}\label{lem:regular}
Let $t\ge3$, and let $G$ be a connected graph with $\delta(G)\geq t-1$. Then  $G=K_t$ or $G$ contains either   $K_{1,t}$   or  $K_{2,t-1}$ as a  minor.
 \end{lem}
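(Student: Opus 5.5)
The plan is to assume that $G$ has no $K_{1,t}$ minor and to show that either $G=K_t$ or $G$ has a $K_{2,t-1}$ minor. The argument combines the degree consequence \eqref{eq:cone-boundary} with a maximality choice and Lemma~\ref{lem:pathcomp}.

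\textbf{Step 1 (regularity).} By \eqref{eq:cone-boundary}, every vertex has degree at most $t-1$. Together with $\delta(G)\ge t-1$, this makes $G$ $(t-1)$-regular.

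\textbf{Step 2 (a maximal connected set).} A single vertex $v$ is a connected set with $|N_G(\{v\})|=t-1$. So we may choose a connected set $A$ with $|N_G(A)|=t-1$ and $|A|$ maximum. Put $X:=N_G(A)$ and $B:=V(G)\less N_G[A]$. By Lemma~\ref{lem:pathcomp}(a), each $x\in X$ has at most one neighbour in $B$. If some $x\in X$ has exactly one neighbour $b\in B$, then $A\cup\{x\}$ is connected and
\[
N_G(A\cup\{x\})=(X\less\{x\})\cup\{b\},
\]
which again has size $t-1$. This contradicts the maximality of $A$. Hence $e_G(X,B)=0$, and connectivity of $G$ forces $B=\emptyset$, so $V(G)=A\cup X$. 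If $|A|=1$, say $A=\{v\}$, then $|G|=t$ and $G$ is $(t-1)$-regular, so $G=K_t$.

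\textbf{Step 3 (the case $|A|\ge2$), which I expect to be the main obstacle.} Here we must build a $K_{2,t-1}$ minor. Contracting $A$ gives one branch vertex adjacent to all of $X$, so what remains is to find a second disjoint connected set adjacent to all $t-1$ vertices of $X$. I would try first to split $A$ along a spanning tree $T$ of $G[A]$. Take a leaf $u$ of $T$. Then $A':=A\less\{u\}$ is connected and $u\in N_G(A')$, so $|N_G(A')\cap X|\le t-2$ by \eqref{eq:cone-boundary}. Also $u$ has all $t-1$ of its neighbours in $A\cup X$. The intended argument uses regularity to count the edges between $A$ and $X$, and the bound $|N_G(C)|\le t-1$ for suitable connected sets $C\subseteq A$ to constrain how the neighbourhoods of different parts of $A$ overlap in $X$. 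The aim is to partition $A$ (or $A$ together with some vertices of $X$ used as connectors, shrinking the target set accordingly) into two connected pieces, each adjacent to every vertex of a common $(t-1)$-set.

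If that fails, the fallback is to replace the maximality of $|A|$ by a different extremal choice, for example $|A|$ minimal subject to $|A|\ge2$. The exchange argument of Step 2 applied to such an $A$ should then show that both $A\less\{u\}$ and $\{u\}$ (after absorbing connectors) see all of $X$. This would yield the required two branch sets of a $K_{2,t-1}$ minor.
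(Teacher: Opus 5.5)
Your Steps 1 and 2 are correct. Regularity follows from \eqref{eq:cone-boundary}, and the exchange $N_G(A\cup\{x\})=(X\less\{x\})\cup\{b\}$ does force $V(G)=A\cup X$ for a maximum $A$, which settles the case $|A|=1$.

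The gap is Step 3. The case $|A|\ge2$ is the entire content of the lemma, and what you give there is a hope, not an argument. Worse, the strategy you describe cannot succeed. Take $t=5$ and the octahedron $K_{2,2,2}$ with non-adjacent (antipodal) pairs $\{a,a'\}$, $\{b,b'\}$, $\{c,c'\}$. It is connected, $4$-regular and not $K_5$. It has no $K_{1,5}$ minor, since singletons have $4$ neighbours and larger connected sets leave at most $4$ vertices outside. The connected sets with exactly four neighbours are the single vertices and the edges. So your maximum $A$, and also the minimal choice with $|A|\ge2$ in your fallback, is an edge, say $A=\{a,b\}$ with $X=\{a',b',c,c'\}$. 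If two hub sets together cover $A\cup S$ for some connector set $S\subseteq X$, the common target lies in $X\less S$. A target of size $4$ therefore forces $S=\emptyset$ and hubs $\{a\},\{b\}$, but $a\not\sim a'$ and $b\not\sim b'$. Indeed, since $|G|=|K_{2,4}|$, every $K_{2,4}$ minor here is a subgraph. Its hubs must be an antipodal pair such as $a,a'$, and the other vertex $b$ of $A$ sits on the target side. So no splitting of $A$, with or without connectors, produces the minor.

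The missing idea, which the paper uses, is a local triangle count around a single vertex rather than a maximal $A$. For an edge $xy$ we have $|N_G(\{x,y\})|=2(t-1)-2-|N(x)\cap N(y)|\le t-1$, so every edge lies in at least $t-3$ triangles. Fix $x$ and put $B:=V(G)\less N[x]$. By Lemma~\ref{lem:pathcomp}(a) with $A=\{x\}$, each $y\in N(x)$ has at most one neighbour in $B$. If $G\ne K_t$, choose $y\in N(x)$ and $z\in B$ with $yz\in E(G)$. Since $N(y)\less\{z\}\subseteq N[x]$ and $x\not\sim z$, all common neighbours of $y$ and $z$ lie in $N(x)$, so $z$ has at least $t-2$ neighbours in $N(x)$.

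If $z$ is adjacent to all of $N(x)$, then $\{x,z\}$ and $N(x)$ span a $K_{2,t-1}$. Otherwise $z$ has a neighbour $z'\in B$. Each vertex of $N(z)\less\{z'\}\subseteq N(x)$ already has $z$ as its unique neighbour in $B$, so $zz'$ lies in no triangle. This contradicts $t-3\ge1$ when $t\ge4$. For $t=3$ this last step gives no contradiction; there $G$ is a cycle of length at least $4$, which contracts to $C_4=K_{2,2}$. The paper's proof passes over that case.
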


\begin{proof} 
Let $G$ and $t$ be as in the statement. Suppose $G\ne K_t$ and $G$ has no $K_{1,t}$ minor. Then $\Delta(G)\leq t-1$ and so $G$ is $(t-1)$-regular because $\delta(G)\geq t-1$. Let $x\in V(G)$. For each $y\in N(x)$,    let $A:=\{x, y\}$. Then  $A$ is connected and $|N_G(A)|\le t-1$. Thus the edge $xy$ belongs to at least $t-3$ triangles in $G$.  It follows that   $\delta(G[N(x)])\ge t-3$. Let $B:=V(G)\less N[x]$. Then each vertex in $N(x)$ is adjacent to at most one vertex in $B$. Since $G\ne K_t$ is connected,   we see that  $B\ne\emptyset$ and  there exist $y\in N(x)$ and $z\in B$ such that $yz\in E(G)$. Note that $z$ is unique neighbor of $y$ in $B$.  By the previous argument, the edge $yz$ belongs to at least $t-3$ triangles, so $z$ is adjacent to at least $t-2$ vertices in $N(x)$. If $z$ is adjacent to all the vertices in $N(x)$, then   $B=\{z\}$ and $G$ contain $K_{2, t-1}$ as a subgraph. Thus there exists $w\in N(x)$ such that $wz\notin E(G)$. Then $w$ has no neighbor in $B$ and there exists  $z'\in B$ such that  $zz'\in E(G)$. But then the edge $zz'$ does not belong to a triangle in $G$, a contradiction.  
\end{proof}

\begin{lem}\label{lem:cone-minors}
 If  $G$ has no  $K_{1, t}$ minor, then no component of $G$ has a spanning tree with at least $t$  leaves.
\end{lem}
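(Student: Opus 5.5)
We prove the contrapositive. Suppose some component $C$ of $G$ has a spanning tree $T$ with at least $t$ leaves; we will exhibit a $K_{1,t}$ minor in $C$, and hence in $G$. The idea is to contract the non-leaf part of $T$ to a single vertex, which then acts as the centre of a star whose leaves are the leaves of $T$. By the remark preceding \eqref{eq:cone-boundary}, it suffices to find a connected set $A\subseteq V(C)$ with $|N_G(A)|\ge t$.

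Let $L$ be the set of leaves of $T$, so $|L|\ge t$. We split into cases according to $|V(C)|$.

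\emph{Case $|V(C)|\ge 3$.} Set $A:=V(C)\setminus L$. We check three things.
\begin{enumerate}[(i)]
\item $A$ is nonempty. If every vertex of $T$ were a leaf, then $T$ would be a single edge, which is impossible when $|V(C)|\ge 3$.
\item $A$ is connected. Deleting leaves from a tree leaves a subtree, so $T-L=T[A]$ is a tree. Hence $G[A]$ is connected.
\item $L\subseteq N_G(A)$. Each leaf $\ell\in L$ has a unique neighbour in $T$. That neighbour cannot itself be a leaf, since otherwise $T$ would consist of the single edge between them, contradicting $|V(C)|\ge3$. So the neighbour lies in $A$, and $\ell$ is adjacent in $G$ to a vertex of $A$.
\end{enumerate}
Therefore $|N_G(A)|\ge |L|\ge t$. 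Equivalently, contracting $A$ to one vertex and deleting the remaining edges produces $K_{1,t}$ as a minor, a contradiction.

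\emph{Case $|V(C)|\le 2$.} Here $T$ has at most $2$ leaves, so $t\le 2$. A spanning tree with $2$ leaves forces $C=K_2$, which is itself $K_{1,1}$. For $t=2$, the paper only needs $t\ge2$ in this section. A single edge is not $K_{1,2}$, so the statement as literally posed needs this degenerate case handled. With the convention that a one-vertex tree has no leaves, a spanning tree with at least $t\ge3$ leaves forces $|V(C)|\ge4$, so only $t=2$ and $C=K_2$ require attention. In that case we note the claim fails unless leaves are counted appropriately, and we restrict to the intended range where $|V(C)|\ge3$.

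The only delicate step is this small-case bookkeeping. The main argument, contracting the internal vertices of $T$, is immediate.
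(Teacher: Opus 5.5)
Your argument is correct and is the same as the paper's proof: delete the leaves of $T$ and contract the remaining nonempty, connected subtree to a single vertex, which becomes the centre of a $K_{1,t}$ whose leaves are the leaves of $T$. Your remark on the degenerate case is also right: for $t=2$ and a component $C\cong K_2$ the statement is literally false, and the paper's one-line proof tacitly assumes $T'\neq\emptyset$. This is harmless, since the lemma is only invoked with $t\ge 4$ on a connected graph with at least $2t+3$ vertices.
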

   
  \begin{proof}
Suppose some component of $G$ contains a spanning tree $T$ with at least $t$ leaves. Let $T'$ be obtained from $T$ deleting all the leaves in $T$. Now   contracting   $T'$ into a single vertex  yields  a $K_{1,t}$ minor in $G$, a contradiction.
  \end{proof}

 To apply  Lemma~\ref{lem:cone-minors} in our proof of Theorem
 `\ref{t:maxdeg}, we utilize  the following results on the maximum number of leaves of spanning trees in connected graphs. 
  
  \begin{thm}[Kleitman and West 1991 \cite{KW}]\label{t:mindeg=3}
If $G$ is a connected graph with $\delta(G)=3$, then $G$ has a spanning tree with at least $(|G|+8)/4$ leaves. 
\end{thm}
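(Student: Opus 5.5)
This result is quoted from Kleitman and West~\cite{KW} and is used as a black box later, but here is how I would prove it. The plan is to grow a tree $T\subseteq G$ by local ``expansion'' operations and track an amortized potential. Call a leaf of $T$ \emph{dead} if it has no neighbor outside $V(T)$. Write $n=|T|$, $L$ for the number of leaves of $T$, and $D$ for the number of dead leaves. The aim is to keep
\[
\Phi(T):=3L+D-n
\]
non-decreasing after the start. When $T$ finally spans $G$, every leaf is dead, so $D=L$ and $\Phi=4L-|G|$. The bound then follows once $\Phi\ge 8$ is ensured along the way.

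\textbf{Start.} Pick any vertex $v$, which has $d(v)\ge3$, and let $T$ be the star on $N[v]$. This gives $n=d(v)+1$, $L=d(v)$ and $\Phi\ge 2d(v)-1\ge 5$. The missing $3$ units of potential must come from a careful first or last step; I return to this below.

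\textbf{Expansion rules.} While $T$ does not span $G$, apply the first rule that is available.
\begin{enumerate}[(i)]
\item A vertex $x\in V(T)$ has at least two neighbors outside $T$. Attach all of them. If $x$ was a leaf, this adds $k\ge2$ vertices and $k-1\ge1$ leaves, so $3\Delta L-\Delta n\ge 3(k-1)-k\ge 1$.
\item A vertex $x\in V(T)$ has exactly one outside neighbor $y$, and $y$ has at least two neighbors outside $V(T)\cup\{y\}$. Attach $y$ and all of these neighbors. This adds $k+1$ vertices with $k\ge2$, and at least $k-1$ new leaves, giving $\Phi$-gain $3(k-1)-(k+1)=2k-4\ge0$.
\item Otherwise, attach some outside $y$ adjacent to $T$. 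Because $d(y)\ge3$ and neither (i) nor (ii) applies, the neighbors of $y$ are forced to be leaves of $T$ (or vertices just added). Attaching $y$ therefore makes at least one old leaf of $T$ dead, and this is where the $+1$ in $D$ pays for the new vertex.
\end{enumerate}
Dead leaves never revive, so $D$ is monotone. The key verification is that in case (iii), and in case (i) when $x$ is internal, we still get $\Delta\Phi\ge 0$. The degree-$3$ hypothesis is used exactly to guarantee that a vertex added without leaf gain must create dead leaves, since its remaining $\ge2$ neighbors already lie in $T$.

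\textbf{Main obstacle.} The hard part is the constant bookkeeping: showing the final $\Phi$ reaches $8$ rather than just $5$. I would first try to choose the starting vertex and the first expansion so that the opening steps have strict gain. I would then argue that the very last operation, which kills all remaining leaves, contributes at least $3$ extra units to $D$, because every last added vertex has $\ge2$ neighbors among leaves that simultaneously become dead. If this accounting fails, the fallback is Kleitman and West's refined rule ordering, which prefers (ii) over (i) for leaves. With that ordering, every application of (iii) kills at least two leaves.
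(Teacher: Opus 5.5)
The paper does not prove this statement; it quotes it from Kleitman and West. Your framework (grow a tree while keeping $3L+D-n$ nondecreasing) is their dead-leaves method, but your sketch does not reach the stated bound, and it has a local error.

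\textbf{The constant is never established.} The invariant only gives $4L-|G|\ge\Phi_0\ge5$, i.e.\ $L\ge(|G|+5)/4$. The additive constant is the whole content of the theorem; it is sharp, e.g.\ for cubic necklaces of copies of $K_4$ minus an edge. Your proposed final-step argument does not supply $3$ units of $\Phi$. In the worst case the last step is (iii) through a leaf $x$ to a last vertex $y$ of degree $3$ whose neighbours are all leaves. Then $\Delta D=3$ but $\Delta L=0$ and $\Delta n=1$, so $\Delta\Phi=2$ and you only get $4L\ge|G|+7$. The fallback claim is also false: applying (iii) to a $y$ with exactly two tree neighbours (both leaves) and one outside neighbour kills exactly one leaf, and no ordering of (i) and (ii) rules this situation out.

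\textbf{Rule (iii) is misstated.} When (i) and (ii) fail, each tree neighbour of $y$ has $y$ as its only outside neighbour, but it may be internal in $T$, so the neighbours of $y$ need not be leaves. If you attach $y$ to a leaf while its other tree neighbours are internal and $y$ still has an outside neighbour, then $\Delta\Phi=-1$. The fix is to add a rule before (iii): if some internal vertex of $T$ has an outside neighbour $y$, attach $y$ there, which gives $\Delta\Phi\ge2$. After that, in (iii) all of the at least two tree neighbours of $y$ are leaves, and one other than $x$ dies.

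\textbf{How to close the gap.}
\begin{enumerate}[(a)]
\item Check that the final step always has $\Delta\Phi\ge2$, because every leaf created in the last step is automatically dead. The gains are at least $3k-3$ for (i), $3k-4$ for (ii), $3$ for the internal-vertex rule, and $d(y)-1$ for (iii).
\item Root the initial star at a vertex of maximum degree, so $\Phi_0\ge2\Delta(G)-1$. For $\Delta(G)\ge4$ this gives $4L\ge|G|+9$. If the star already spans $G$, then directly $L=|G|-1\ge(|G|+8)/4$.
\item If $\Delta(G)=3$, then $G$ is cubic, so $|G|$ is even. Then $4L\ge|G|+7$ forces $4L\ge|G|+8$ by parity.
\end{enumerate}
With these additions, your argument becomes a complete proof.
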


\begin{thm}[Kleitman and West 1991 \cite{KW}; Griggs and Wu 1992 \cite{GW}]\label{t:mindeg=4}
If $G$ is a connected graph with $\delta(G)=4$, then $G$ has a spanning tree with at least $(2|G|+8)/5$ leaves. 
\end{thm}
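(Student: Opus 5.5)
Since this is a cited result, my plan follows the standard ``expansion with dead-leaf accounting'' method of Kleitman and West, adapted to minimum degree four as Griggs and Wu did. We grow a tree $T\subseteq G$ by a sequence of \emph{expansion} operations, until $T$ is spanning. Throughout we track three quantities:
\[
n_T=|T|,\qquad L=\#\text{leaves of }T,\qquad D=\#\text{dead leaves},
\]
where a leaf is \emph{dead} if it has no neighbor in $V(G)\setminus V(T)$. Expanding a vertex $x\in V(T)$ means adding all neighbors of $x$ outside $T$ as new leaves attached to $x$. The aim is to choose constants $\alpha,\beta$ so that every operation satisfies
\[
\alpha\,\Delta L+\beta\,\Delta D\ \ge\ \Delta n_T .
\]
At the end every leaf is dead, so $D=L$. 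Summing the inequality over all operations and adding the initial configuration then gives $L\ge (2|G|+8)/5$. We start from a vertex of degree at least $4$ expanded into a star with at least $4$ leaves. For the star $K_{1,4}$ we have $5L=20\ge 2\cdot 5+8$, so the start has slack $2$. This slack, together with the final dead-leaf term, is what produces the additive constant $8$.

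The operations are applied in a priority order, as follows.
\begin{enumerate}[(O1)]
\item If some leaf $x$ has at least two neighbors outside $T$, expand $x$. With $k\ge 2$ new leaves, $\Delta n_T=k$ and $\Delta L=k-1$, and $5(k-1)\ge 2k$ holds.
\item If some leaf $x$ has exactly one outside neighbor $y$, and $y$ has at least $3$ neighbors outside $T\cup\{y\}$, expand $x$ and then $y$. Here $\Delta n_T=m+1$ and $\Delta L=m-1$ with $m\ge 3$, which is fine.
\item Otherwise every leaf $x$ with an outside neighbor has a unique outside neighbor $y$, and each such $y$ has at most $2$ outside neighbors. Since $\deg(y)\ge 4$, $y$ then has at least two neighbors in $T$, and these are leaves, because internal vertices have already been expanded. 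Adding $y$ via one of these leaves and expanding it therefore turns the other $T$-leaf neighbors of $y$ into dead leaves whenever $y$ was their only outside neighbor.
\end{enumerate}
In case (O3) the gain must come from $\Delta D$ rather than $\Delta L$, which is why the weight $\beta$ is needed.

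The main obstacle is (O3), together with the degenerate sub-cases where $y$ has $0$ or $1$ outside neighbors. There $\Delta L$ can be $0$ or even $-1$ while $\Delta n_T\ge 1$, so the whole inequality rests on showing that enough leaves die. My plan is to refine (O3) by a further case analysis on whether the leaf neighbors of $y$ in $T$ have other outside neighbors. If they do, they must have exactly one, by the failure of (O1). In that case I would look for a two-step expansion whose combined gain satisfies the inequality; if none exists, I would use minimum degree $4$ to force many adjacencies inside $T$, producing enough newly dead leaves. I would first try $\alpha=5/2$ and $\beta=1/2$, that is, the potential $5L+D\ge 2n_T+\text{const}$. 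I would then check each sub-case numerically, and adjust the weights if one sub-case fails. The remaining cases are routine counting once the weights are fixed.
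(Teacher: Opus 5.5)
The paper does not prove this statement; it only cites Kleitman--West and Griggs--Wu. Your outline therefore has to stand on its own, and it has two genuine gaps.

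\textbf{The weights cannot give the target bound.} Suppose every operation satisfies $\alpha\,\Delta L+\beta\,\Delta D\ge\Delta n_T$ and $D=L$ at the end. Then $(\alpha+\beta)L\ge|G|+c$, so reaching $(2|G|+8)/5$ requires $\alpha+\beta=5/2$. Your choice $\alpha=5/2$, $\beta=1/2$ (the potential $5L+D-2n_T$) has $\alpha+\beta=3$. Even if every case checked out, it would give only $L\ge|G|/3+O(1)$.

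Since (O1) with $k=2$ can have $\Delta D=0$, it forces $\alpha\ge2$. The most favorable admissible choice is therefore $\alpha=2$, $\beta=1/2$, i.e.\ the potential $4L+D-2n_T$. Its initial value on $K_{1,4}$ is $6$, whereas the final inequality $5L\ge2|G|+8$ needs $8$. Your ``slack $2$'' compared the star with the final target rather than with the potential. So you also need a bonus of $2$ from the last operation, checked for every operation type that can occur last.

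\textbf{The (O3) move fails under any admissible weights, and this is where the whole difficulty lies.} Once (O1) is inapplicable, every leaf has at most one outside neighbor. Hence every $T$-neighbor of $y$ already has $y$ as its \emph{only} outside neighbor. Your proposed split by whether these leaves have other outside neighbors is vacuous, and the leaves it would kill are already counted. Counting directly:
\begin{itemize}
\item If $y$ has $m=2$ outside neighbors and $2$ neighbors in $T$, attaching and expanding $y$ gives $\Delta n_T=3$, $\Delta L=1$, and possibly only $\Delta D=1$. The left side is then $\alpha+\beta=5/2<3$.
\item If $m=1$ and $y$ has $3$ neighbors in $T$, you get $\Delta n_T=2$, $\Delta L=0$, and possibly only $\Delta D=2$. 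The left side is then $2\beta\le1<2$.
\end{itemize}

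Rescuing these cases means looking past $y$ to its outside neighbors, at distance two from $T$. You would need to design several multi-vertex expansions whose combined effect satisfies $4\Delta L+\Delta D\ge2\Delta n_T$. You would also need to prove that $\delta\ge4$ always makes one of them available. That case analysis is the entire content of the Kleitman--West and Griggs--Wu arguments and is far from ``routine counting''. Your proposal sets up the framework but supplies none of the operations that make it work.
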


\begin{thm}[Griggs and Wu 1992 \cite{GW}]\label{t:mindeg5}
If $G$ is a connected graph with $\delta(G)\ge5$, then $G$ has a spanning tree with at least $(|G|+4)/2$ leaves. 
\end{thm}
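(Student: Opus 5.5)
The statement is the Griggs--Wu theorem quoted from \cite{GW}. I would reprove it by the standard ``tree-growing with amortized accounting'' method of Kleitman and West: grow a tree $T$ inside $G$ by a sequence of expansion operations until $T$ is spanning, while keeping an inequality between the number of vertices of $T$, its leaves, and its dead leaves. Call a leaf of $T$ \emph{dead} if it has no neighbor outside $V(T)$. Let $n_T=|T|$, let $L$ be the number of leaves, and let $D$ be the number of dead leaves. I would aim for an invariant of the form
\[
\Phi(T):=\alpha L+\beta D-n_T\ \ge\ c,
\]
with constants $\alpha,\beta,c$ tuned so that $\alpha+\beta=2$. When $T$ becomes spanning, every leaf is dead, so $L=D$, and the invariant gives $2L-n\ge c$. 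Taking $c=4$ would give exactly $L\ge (n+4)/2$. To start, take a vertex $v$ together with all of its at least $5$ neighbors; this gives $n_T=d(v)+1$ and $L=d(v)$, and I would check the base value of $\Phi$ against $c$, adjusting the constants if needed.

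The expansion steps I would use, each checked to satisfy $\Delta\Phi\ge 0$, are these.
\begin{enumerate}[(1)]
\item \emph{Direct expansion.} A leaf $x$ has $k\ge2$ neighbors outside $T$, and we attach all of them to $x$. Then $L$ increases by $k-1$ and $n_T$ by $k$, and any leaves that this kills increase $D$.
\item \emph{Two-step expansion.} A leaf $x$ has a unique outside neighbor $y$, and $y$ has $k\ge3$ outside neighbors. We add $y$ and then all of $y$'s outside neighbors, so $L$ increases by $k-1$ and $n_T$ by $k+1$.
\item \emph{Absorption.} An outside vertex $y$ has at most two outside neighbors. Since $\delta(G)\ge5$, $y$ then has at least $3$ neighbors among the current leaves. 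We attach $y$ to one of these leaves, with its outside neighbors if any. This step may not increase $L$, but it kills many leaves at once, because the other $T$-neighbors of $y$ that had $y$ as their only outside neighbor become dead. That is why the term $\beta D$ is needed.
\end{enumerate}
Since $G$ is connected, while $T$ is not spanning some leaf has an outside neighbor, so one of these situations always applies. The case analysis must confirm this exhaustion.

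The main obstacle is step (3) and its interaction with (2). In the borderline situation every outside vertex adjacent to $T$ has only one or two outside neighbors, and each leaf's outside neighborhood is small. Here one has to count carefully how many leaves genuinely become dead: a leaf adjacent to $y$ might still have another outside neighbor. The degree hypothesis $\delta\ge5$ must be used to guarantee enough newly dead leaves per added vertex. I would first try to choose $y$ to maximize the number of leaves it makes dead. If that fails, I would refine the invariant by counting leaves according to how many outside neighbors they have, for instance with weights for leaves having exactly one outside neighbor. I would then solve the resulting small linear system of constraints, one inequality per case, for $\alpha,\beta$.

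Finally, any leftover slack in the small cases, such as small $n$ or $G=K_6$, is checked by hand. Verifying that the whole constraint system is feasible with $\alpha+\beta=2$ and $c=4$ completes the proof.
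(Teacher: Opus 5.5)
\textbf{There is a genuine gap.} The paper does not prove this statement: it quotes it from Griggs and Wu and uses it as a black box, only to exclude $\delta(H)\ge 5$ in the proof of Lemma~\ref{lem:H}. Your proposal sets up the Kleitman--West tree-growing framework, but it defers exactly the part that is the content of the theorem. Moreover, the potential you do specify cannot work with the operations you list.

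Take $\Phi=\alpha L+\beta D-n_T$ with $\alpha+\beta=2$.
\begin{enumerate}[(i)]
\item Operation (1) with $k=2$ can have $\Delta L=1$, $\Delta n_T=2$ and $\Delta D=0$, since nothing forces any leaf to die. So $\Delta\Phi=\alpha-2$, and you need $\alpha\ge 2$. Operation (2) with $k=3$ gives the same bound.
\item Now take operation (3) with $y$ of degree $5$, having two outside neighbors and three tree neighbors, all leaves. Suppose (1) has priority, so the two leaves other than $x$ have $y$ as their only outside neighbor and die. Even then you are only guaranteed $\Delta D=2$. Attaching $y$ alone gives $\Delta\Phi=2\beta-1$, which forces $\alpha\le 3/2$. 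Attaching $y$ together with its two outside neighbors gives $\Delta L=1$, $\Delta n_T=3$ and $\Delta\Phi=\alpha+2\beta-3=1-\alpha$, which forces $\alpha\le 1$. Either way this contradicts (i).
\item The base case fails as well. In a triangle-free $5$-regular graph the initial star has $n_T=6$, $L=5$, $D=0$. So $\Phi=5\alpha-6\ge 4$ again requires $\alpha\ge 2$, hence $\beta\le 0$. Then absorbing a $y$ with no outside neighbor gives $\Delta\Phi=\beta\,\Delta D-1<0$.
\end{enumerate}
So the ``small linear system'' you plan to solve is infeasible. Lowering $c$ gives up the additive $+4$, and changing $\alpha+\beta$ gives up the leading coefficient $1/2$.

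What is missing is the actual argument, which needs three things:
\begin{itemize}
\item a finer potential, for instance weighting live leaves by their number of outside neighbors, as you hint;
\item additional operations with look-ahead, for leaves with exactly two outside neighbors and for outside vertices with few outside neighbors;
\item an exhaustive case analysis, using $\delta\ge 5$ throughout, showing that every step has nonnegative gain and that the additive $+4$ is recovered.
\end{itemize}
That case analysis is the substance of Griggs and Wu's paper. Saying you would refine the invariant if the first attempt fails does not supply it. In the context of this paper, the appropriate move is simply to cite the result, as the authors do.
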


\section{Proof of Theorem~\ref{t:maxdeg}}\label{s:maxdeg}
We begin this section with two key lemmas. For each vertex $v$ in a graph $G$, we define 
\[
b_G(v): =|\{u\in V(G)\mid d_G(v, u)=2\}|.
\]
Suppose  $G$ has no $K_{1,t}$ minor. Then for each vertex $v\in V(G)$ and every non-empty connected set $A\subseteq V(G)$, we have 
\[
d_G(v)\le t-1,\quad b_G(v)\le t-1 \quad \text{ and } |N_G(A)|\le t-1.
\]

\begin{lem}\label{lem:key}
Let $t\ge4$ be an integer and let $G$ be a connected graph with $\delta(G)\ge2$,    positive Lin--Lu--Yau curvature, and no $K_{2,t}$ minor.  If there exists a  vertex $u\in V(G)$  with $d_G(u) \ge 2t+3$, then there exists a  graph $H$ with $|H|=d_G(u) \ge 2t+3$ and $\delta(H)\ge1$ such that $K_1+ H$ is a minor of $G$ and every $v\in V(H)$ satisfies
\[\frac{|H|\cdot d_H(v)}{d_H(v)+1}<2d_H(v)+2+b_H(v). \]
\end{lem}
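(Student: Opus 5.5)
The plan is to take $u$ with $D:=d_G(u)\ge 2t+3$, let $H$ live on the vertex set $N_G(u)$ (so that $|H|=D$ automatically), and obtain $K_1+H$ as a minor of $G$ with $u$ as the apex. The edges of $H$ should record which vertices of $N(u)$ are ``close'' to each other in $G-u$. Concretely, I start from $G[N(u)]$. For each component $C$ of $G-N[u]$, I contract $C$ onto one neighbour $w\in N(u)$ of $C$; this is possible since $G$ is connected. After all the contractions, the minor on $\{u\}\cup N(u)$ is $K_1+H$, where $H$ is $G[N(u)]$ together with the extra edges $ww'$ for every $w'\in N(u)$ that has a neighbour in $C$.

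To keep control of degrees, I will choose the contraction targets carefully. For each component $C$, I pick as target a vertex of $N(C)\cap N(u)$ of maximum degree in $H$, or I use the structure of Lemma~\ref{lem:pathcomp} to show that $|N(C)\cap N(u)|$ is small. The point is that every $v\in N(u)$ should satisfy $d_G(v)\le d_H(v)+1$ plus a controlled amount coming from outside $N[u]$. The condition $\delta(H)\ge 1$ comes from $\delta(G)\ge2$: each $v\in N(u)$ has a neighbour $z\ne u$. If $z\in N(u)$ we are done. Otherwise $z$ lies in some component $C$, and after the contraction $v$ is joined to the target of $C$, or is itself that target and is then joined to the other vertices of $N(C)\cap N(u)$. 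The degenerate case $N(C)\cap N(u)=\{v\}$ needs a separate argument. There I would instead contract $C$ into $v$ and argue via positive curvature on the edge $vz$, using Theorem~\ref{thm:curvature_laplacian}, that this cannot happen; alternatively I would allow $v$ to be replaced in the minor by a branch set containing $C$.

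The inequality will come from positive curvature of the edge $uv$ through Theorem~\ref{thm:curvature_laplacian}, applied with $x=u$, $y=v$ and an integer 1-Lipschitz test function $f$ on $N[u]\cup N[v]$ with $f(u)=0$ and $f(v)=1$. Writing $c:=d_H(v)$ and $b:=b_H(v)$, I assign values as follows:
\begin{itemize}
\item on $N(u)$, set $f=1$ on $H$-neighbours of $v$, $f=0$ on the $b$ vertices at $H$-distance $2$ from $v$, and $f=-1$ on the remaining $D-1-c-b$ vertices;
\item on the neighbours of $v$ outside $N[u]$, set $f=2$.
\end{itemize}
The choice is 1-Lipschitz precisely because the $H$-edges record all $G$-paths of length at most $2$ that avoid $u$; this is exactly why $H$ is built from contractions. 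Then $\LL f(u)=\bigl(c+1-(D-1-c-b)\bigr)/D$, and $\LL f(v)$ is at least $(-1)/(c+1)$ after normalising by the effective degree of $v$. The condition $\LL f(u)-\LL f(v)\ge\kappa(u,v)>0$ then rearranges, after clearing denominators, to $D\,c/(c+1)<2c+2+b$, which is the stated inequality.

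The main obstacle is the degree bookkeeping for $v$. The test function sees $d_G(v)$, not $d_H(v)$, and the vertices of $N(v)\setminus N[u]$ must all receive value $2$ without breaking the Lipschitz condition. They also must not be adjacent to vertices that received $-1$. I would handle this by showing that any such violation produces a $G$-path of length $\le 3$ through $C$ between $v$ and a $-1$ vertex, and I would then choose the contractions so that this path becomes an $H$-edge or an $H$-distance-$2$ pair. The no-$K_{2,t}$ hypothesis, via Lemma~\ref{lem:cone-minors_1} and Lemma~\ref{lem:pathcomp} applied to $A=\{u\}$, should be what bounds how many vertices of $N(u)$ a single outside component can touch, and hence makes this choice possible.
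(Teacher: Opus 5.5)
\textbf{There is a genuine gap: your test function is not $1$-Lipschitz, and the proposed patch cannot make it so.}

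With $f(v)=1$ and $f(z)=2$ for every $z\in N_G(v)\setminus N[u]$, the Lipschitz condition forces every vertex of $N(u)$ that is $G$-adjacent to such a $z$ to get value $1$, i.e.\ to be an $H$-neighbour of $v$. A value-$0$ vertex at distance $1$ from a value-$2$ vertex is already a violation, not only a value-$(-1)$ vertex. Contracting a component $C$ of $G-N[u]$ onto a single target $w$ only creates a star from $w$ to $N(C)$, so this requirement holds only when $v=w$.

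Concretely, let $z\in C$ be adjacent to three pairwise non-adjacent $v_1,v_2,v_3\in N(u)$, with no other $H$-edges joining $v_2,v_3$, and contract $C$ onto $v_1$. For $v=v_2$ you get $d_H(v_2,v_3)=2$, hence $f(v_3)=0$, while $f(z)=2$ and $zv_3\in E(G)$. Whichever target you pick, the two non-targets fail in this way, so no choice of targets helps. Your justification, that ``$H$-edges record all $G$-paths of length at most $2$ avoiding $u$'', is false for any $H$ built this way.

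The no-$K_{2,t}$ hypothesis does not rescue this. It only gives $|N(C)|\le t-1$, and $t-1\ge 3$. Lemma~\ref{lem:pathcomp} does not apply to $G$ with $A=\{u\}$, since $G$ has a $K_{1,t}$ minor ($d_G(u)\ge 2t+3$).

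\textbf{A repair within your approach.} Put $f(z)=2$ only for neighbours $z$ of $v$ in components contracted onto $v$, and $f(z)=1$ for neighbours of $v$ in all other components. The Lipschitz checks then go through as you intended.
\begin{itemize}
\item If $v$ is the target of some component, then $\LL f(v)\ge 0$.
\item Otherwise each component touching $v$ adds only its target to $N_H(v)$ but contains a $G$-neighbour of $v$. Hence $d_G(v)\ge d_H(v)+1$, and $\LL f(v)=-1/d_G(v)\ge -1/(d_H(v)+1)$.
\end{itemize}
In both cases your formula for $\LL f(u)$ yields the inequality. If $d_H(v)=0$, then $\delta(G)\ge 2$ forces $v$ to be a target, giving $D<2$, which also settles $\delta(H)\ge 1$.

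\textbf{How the paper avoids the problem.} It contracts each vertex at distance $2$ from $u$ into a neighbour in $N(u)$. It then uses the automatically Lipschitz function $f=\min\{d_G(\cdot,N_G[v]\setminus\{u\}),2\}$ to get $D(1-1/d_G(v))<2c+2+b$ in $G$-quantities. It transfers this to $H$ via $d_H(v)+b_H(v)\ge c+b$, plus a separate estimate using $D\ge 2t+3$ and $b\le t-1$ when $d_H(v)\ge d_G(v)$.
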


\begin{proof}
 Let $t, G, u$ be  as in the statement. 
 Fix $v\in N_G(u)$. Let 
\[
D:=d_G(u), \quad d:=d_G(v),\quad c: =|N_G(v)\cap N_G(u)|,\quad
A: =N_G[v]\setminus\{u\},\quad b:=|N_G(A)\cap N_G(u)|.
\]
Since $G$ has  no $K_{2,t}$ minor, we see that   $c\le t-1$ and  $b\le t-1$. Moreover, if $d=2$, then $b\le t-2$.  

For each vertex $x\in V(G)$, let  $f(x): =\min\{d_G(x,A),2\}$. Then  $f$ is a $1$-Lipschitz function with  $f(v)=0$, $f(u)=1$, $f(a)=0$ for each $a\in A$, $f(z)=1$ for each $z\in N_G(A)\cap N_G(u)$, and $f(w)=2$ for each $w\in N_G(u)\setminus N_G[A]$.  Consequently,
\[
\LL f(v)=\frac1d 
\text{ and } 
\LL f(u)=\frac{(c+1)(0-1)+b(1-1)+(D-b-c-1)(2-1)}D=1-\frac{2c+2+b}{D}.
\]
Recall that  $0<\kappa_{\textrm{LLY}}(v,u)\le \LL f(v)-\LL f(u)\le\frac1d-\big(1-\frac{2c+2+b}{D}\big)$ and $d\ge2$. It follows that 
\begin{equation}\label{eq:local}
\frac D2\le  D\left(1-\frac1d\right)<2c+2+b.
\end{equation}
Since $b\le t-1$ and $D\ge2t+3$, we see that  $c\ge1$. Moreover,  if    $c=1$, then $d=2$, $b=t-2$ and  $D=2t+3$. 
\medskip

Let $B:=\{w\in V(G)\mid d_G(u,w)=2\}$. For  each vertex $w\in B$, let $w'\in N_G(u)$ such that $ww'\in E(G)$.  Let $H$ be obtained from $G[N_G(u)\cup B]$ by contracting each of the edges $ww'$ and deleting all resulting loops and parallel edges. Then $|H|=|N_G(u)|=D\ge 2t+3$ and $K_1+H$ is a minor of $G$.   Let $d,c,A,b, B$ be    as defined above. Note that for each $v\in N_G(u)$, $r:=d_H(v)\ge c=|N_G(v)\cap N_G(u)|\geq 1$, and for each of the vertex $x$ in $N_G(v)\cap N_G(u)$ and $N_G(A)\cap N_G(u)$, we have $vx\in E(H)$  or $d_H(v, x)=2$.   Therefore, $\delta(H)\geq 1,$ $r+b_H(v)\ge c+b$ and so 
\begin{equation}\label{eq:transfer-boundary}
2r+2+b_H(v)=(r+b_H(v))+r+2\ge (c+b)+r+2=2c+2+b+(r-c).
\end{equation}
Moreover, if $r=1$, then  $c=1$ and     $D=2t+3$. We next show that 
\begin{equation}\label{eq:transfer-degree}
r-c+\frac{D}{r+1}\ge\frac Dd.
\end{equation}
Since $r\ge c$, we see that   inequality~\eqref{eq:transfer-degree} holds trivially if  $r+1\le d$. We may assume that $r\ge d$.  Then $v$ has at least one neighbor, say $v'$,  in $B$ such that contracting the edge $v''v'$ yields at least one new neighbor of $v$ in $N_H(v)\setminus A$, where $v''\in N_G(u)$. It follows that 
  $d\ge |N_G(v)\cap N_G(u)|+|\{u, v'\}|=c+2$. Then $c\le d-2$. Since $c\ge1$, we have $d\ge3$.  By~\eqref{eq:local}, 
  \[\frac{2t+3}2+D\left(\frac12-\frac1d\right)\le \frac D2+ D\left(\frac12-\frac1d\right)<2c+2+b\le 2(d-2)+2+(t-1),\] which implies that  
\[
D\left(\frac12-\frac1d\right)
<2d-\frac92,
\quad\text{so}\quad
D<\frac{d(4d-9)}{d-2}<4d\le d(d+1).
\]
Since $r\ge d$, we have  $D<d(d+1)\le d(r+1)$, and thus
\[
\frac Dd-\frac{D}{r+1}
=\frac{D(r+1-d)}{d(r+1)}
<r+1-d\le r+1-(c+2)<r-c.
\]
This proves~\eqref{eq:transfer-degree}.

Finally, combining~\eqref{eq:transfer-boundary}, \eqref{eq:transfer-degree}, and~\eqref{eq:local} in order yields
\begin{align*}
2r+2+b_H(v)+\frac{D}{r+1}
&\ge \bigl(2c+2+b+(r-c)\bigr)+\frac{D}{r+1}\\
&= 2c+2+b+\left(r-c+\frac{D}{r+1}\right)\\
&\ge (2c+2+b)+\frac{D}{d}\\
&> D\left(1-\frac{1}{d}\right)+\frac{D}{d}\\
&= D.
\end{align*}
Thus $Dr/(r+1)=D-D/{r+1}<2r+2+b_H(v)$ for every $v\in V(H)$, as desired.  \medskip

This completes the proof of Lemma~\ref{lem:key}.
\end{proof}
\begin{lem}\label{lem:H}

Let $H$ be a graph with $\delta(H)\geq 1$ and for each $v\in V(H)$,
   \begin{equation}\label{ineq}
\frac{|H|\cdot d_H(v)}{d_H(v)+1}<2d_H(v)+2+b_H(v). 
\end{equation}
Suppose
  \begin{equation}\label{ineq:H_maxdeg}
|H| \ge
\begin{cases}
2t+4 & \text{ for}\quad t=4,\\
2t+5&  \text{ for}\quad t=5,6\\
2t+3 &  \text{ for}\quad t\ge7,
\end{cases}
\end{equation}
where $t\ge4$ is an integer. 
Then $H$ contains  either  $K_{1,t}$  or $K_{2,t-1}$ as a minor.
\end{lem}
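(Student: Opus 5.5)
We argue by contradiction. Suppose $H$ has neither a $K_{1,t}$ nor a $K_{2,t-1}$ minor. Write $n:=|H|$, which satisfies \eqref{ineq:H_maxdeg}. By \eqref{eq:cone-boundary} we have $d_H(v)\le t-1$ for every $v$. Since $N_H(N_H[v])$ is the set of vertices at distance two from $v$ and $N_H[v]$ is connected, we also get $b_H(v)\le t-1$ whenever $v$ lies in a component with more than $d_H(v)+1$ vertices.

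\textbf{Step 1: small degrees.} Plugging $d_H(v)=r$ into \eqref{ineq} gives $nr/(r+1)<2r+2+b_H(v)\le 2r+t+1$, so $n<(r+1)(2r+t+1)/r$. For $r=1$ this reads $n<2(t+3)=2t+6$, and for $r=2$ it reads $n<\tfrac32(t+5)$. These constraints are not contradictions by themselves, so low-degree vertices must be handled by the minor structure.

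\textbf{Step 2: large degrees and leaf counts.} For large $r$ (close to $t-1$), \eqref{ineq} gives roughly $n\lesssim 3t$. So the real content is to combine \eqref{ineq} with Lemma~\ref{lem:cone-minors}: no component of $H$ has a spanning tree with $t$ or more leaves. The plan is as follows.
\begin{enumerate}[(i)]
\item Treat each component $C$ separately. If $|C|\le d_H(v)+1$ for some $v\in C$ (for example $C$ is a clique or near-clique), then $b_H(v)=0$, and \eqref{ineq} forces $n<(r+1)(2r+2)/r=2(r+1)^2/r$. With $r\le t-1$ this gives $n<2t^2/(t-1)\approx 2t+2$, which contradicts \eqref{ineq:H_maxdeg}, except for $t$ small, where one checks directly.
\item For larger components, use the fact that a vertex of degree $r$ with $b_H(v)$ second neighbours already yields a tree with about $r-1+b_H(v)$ leaves in the ball of radius two. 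Combined with \eqref{ineq}, this should force $n$ to be small relative to $t$.
\item Since $H$ may have several components (the sharp examples $K_1+2\overline{C_{t+1}}$ have two), bound the sum of component sizes. The structure is: each component has no spanning tree with $t$ leaves, while \eqref{ineq} couples the global $n$ to local parameters. For components with minimum degree at least three, Theorems~\ref{t:mindeg=3}, \ref{t:mindeg=4} and \ref{t:mindeg5} give $|C|\le 4t-9$, $(5t-9)/2$ and $2t-5$ respectively. Meanwhile \eqref{ineq} at a vertex of minimum degree $r$ in $C$ gives $n<(r+1)(2r+2+b)/r$.
\end{enumerate}

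\textbf{Step 3: degree-$1$ and degree-$2$ vertices and path structure.} For $v$ of degree one or two we have $b_H(v)$ small, since $b_H(v)\le |N(N[v])|$ and a degree-$2$ vertex gives $b\le t-2$ by the remark in Lemma~\ref{lem:key}. Using $n\ge 2t+3$, inequality \eqref{ineq} at such a $v$ requires $b_H(v)$ to be near $t-1$. In that case Lemma~\ref{lem:pathcomp} applies to $A=N_H[v]$: outside $N[A]$ everything is paths attached in a restricted way. Vertices on these paths have degree at most two and small $b$, and applying \eqref{ineq} to them gives a contradiction. This would show that low-degree vertices cannot coexist with large $n$.

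\textbf{Main obstacle.} The hardest part will be the exact thresholds for $t=4,5,6$, which are larger than the general bound $2t+3$. These require a careful case analysis on the minimum degree of components using Theorems~\ref{t:mindeg=3} and~\ref{t:mindeg=4}, together with Lemma~\ref{lem:regular} to dispose of $(t-1)$-regular components, which must be $K_t$. I would first settle $t\ge7$ with crude counting, and then do finite case checks for small $t$.
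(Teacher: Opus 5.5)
Your proposal is a plan rather than a proof. The part it leaves open (``this should force $n$ to be small'', ``bound the sum of component sizes'') is the heart of the lemma: controlling $n=|H|$ when $H$ may be disconnected.

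\begin{itemize}
\item At a single vertex, \eqref{ineq} with $r,b_H(v)\le t-1$ only gives $n<(r+1)(2r+2+b_H(v))/r$, which permits $n$ up to about $3t+2$.
\item Step 2(ii) is local to the ball of radius two and yields no bound on $n$.
\item The Kleitman--West and Griggs--Wu bounds in Step 2(iii) bound $|C|$ for each component (by $4t-9$ etc.), not $n$, so they cannot be summed into a contradiction.
\end{itemize}

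The missing idea is to look at a component $C$ with $|C|\le n/2$. For $v\in V(C)$ with $r=d_H(v)\ge 2$ one has $r+b_H(v)\le |C|-1\le n/2-1$. So \eqref{ineq} becomes $nr/(r+1)<n/2+r+1$, i.e.\ $n<2(r+1)^2/(r-1)$. Combined with \eqref{ineq:H_maxdeg}, this forces every degree in $C$ to be $1$ or $t-1$; degree $2$ is handled via the degree-$2$ analysis below, with a small case check when $t=4$. Lemma~\ref{lem:regular} then gives $\delta(C)=1$, whence $n=2t+3$ and $|C|=t+1$, and a short argument around a degree-$1$ vertex yields a contradiction. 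So $H$ is connected. Only then do leaf counts and path counts, applied to $H$ itself with $|H|\ge 2t+3$, finish the proof.

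Step 3 also contains an error. The bound $b\le t-2$ for $d=2$ in Lemma~\ref{lem:key} is about $G$, where one of the two neighbours of $v$ is the apex $u$; it has no analogue in $H$.
\begin{itemize}
\item At a degree-$2$ vertex, \eqref{ineq} actually forces $b_H(v)=t-1$ and $t\in\{4,7,8\}$.
\item At a degree-$1$ vertex it forces $b_H(v)=t-2$ and $n=2t+3$. Your Step 1 bound $n<2t+6$ overlooks that $b_H(v)\le d_H(u)-1\le t-2$ there.
\end{itemize}

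Such vertices are not excluded by applying \eqref{ineq} to path vertices, since path endpoints may have large degree. Instead, take a connected $A$ with $|A|\le 3$ and $|N(A)|=t-1$. Lemma~\ref{lem:pathcomp}, together with the degree-$1$/degree-$2$ facts, shows that each component $B$ of $C-N[A]$ is a path on at most four vertices sending at least $|B|$ edges into $N(A)$. Hence $|C|\le |N[A]|+t-1\le 2t+1$. This contradicts \eqref{ineq:H_maxdeg} only once $C=H$, which again needs connectivity.

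Likewise, for $\delta(H)\ge 3$ you must first use \eqref{ineq} at a minimum-degree vertex to cap $t$:
\begin{itemize}
\item $t\le 10$ if $\delta(H)=4$;
\item $t\le 7$ if $\delta(H)=3$, after excluding $b_H(v)=t-1$ by the same path count.
\end{itemize}
Only then do Theorems~\ref{t:mindeg=3}, \ref{t:mindeg=4} and~\ref{t:mindeg5}, applied to the connected graph $H$, produce a spanning tree with at least $t$ leaves.
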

\begin{proof}
Let $H, t$ be as in the statement. Suppose $H$ contains neither  $K_{1,t}$ nor $K_{2,t-1}$ as a minor. Then  $\Delta(H)\leq t-1$.

\begin{claim}\label{cl:deg_1} 
If $d_H(v)=1$ for some $v\in V(H)$, then $b_H(v)=t-2$ and $|H|=2t+3$.
\end{claim}
\begin{proof}
Suppose $v\in V(H)$ has $d_H(v)=1$, and let $u$ be the unqiue neighbor of $v$. Note that $|b_H(v)|=|N(u)\setminus \{v\}|=d_H(u)-1\leq \Delta(H)-1\leq t-2$. By inequality~\eqref{ineq},
\[\frac{|H|}{2}<2+2+b_H(v)\leq t+2,\]
and so $|H|<2t+4$. Then inequality~\ref{ineq:H_maxdeg} implies that $|H|=2t+3$ and $b_H(v)=t-2$.
\end{proof}
\begin{claim}\label{cl:deg_2} 
If $d_H(v)=2$ for some $v\in V(H)$, then $b_H(v)=t-1$ and $t\in \{4, 7,8\}$. Moreover, $|H|\in \{2t+4, 2t+5\}$ when $t=4$, and $|H|=2t+3$ when $t\in \{7,8\}$.
\end{claim}
\begin{proof}
Suppose $v\in V(H)$ has $d_H(v)=2$. Note that $|b_H(v)|\leq t-1$. By inequality~\eqref{ineq},
\[\frac{2|H|}{3}<4+2+b_H(v)\leq t+5,\]
and so \[|H|<\frac{3t+15}{2}=2t+3-\frac{t-9}{2}=2t+4-\frac{t-7}{2}=2t+5-\frac{t-5}{2}.\] Then inequality~\ref{ineq:H_maxdeg} implies that $t\in \{4, 7,8\}$. Moreover,  $|H|\in \{2t+4, 2t+5\}$ when $t=4$, and $|H|=2t+3$ when $t\in \{7,8\}$. Note that $b_H(v)>2|H|/3-6$. Hence, if $t=4$, then $b_H(v)>2(2t+4)/3-6= 2=t-2$, and if $t\in \{7,8\}$, then $b_H(v)>2(2t+3)/3-6>t-2$. In both cases, $b_H(v)=t-1$ as $b_H(v)\leq t-1$.
\end{proof}
\begin{claim}\label{cl:special_A} Let $C$ be a component of $H$. If there exists a connected set  $A\subseteq V(C)$ such that  $|N_C(A)|=t-1$, then  
\[|C|-|N_C[A]|\le |N_C(A)|=t-1.\]  
 \end{claim}
 
 \begin{proof} Let $J:=C\setminus N_C[A]$ and let $B$ be a component of $J$. Since $C$ has no $K_{1, t}$ minor and $|N_C(A)|=t-1$,  by Lemma~\ref{lem:pathcomp}, each vertex in $N_C(A)$ is adjacent to at most one vertex in $J$  and   $B$ is a path such that no   internal vertex on the path  is adjacent to any vertex in $N_C(A)$. Thus $e_C(N_C(A), V(J))\le |N_C(A)|$.  Let $v_1, \ldots, v_\ell$ be the vertices of $B$ in order, where $\ell=|B|$. If $\ell\ge5$, then $d_C(v_3)=d_H(v_3)=2$ and   $b_H(v_3)=|\{v_1, v_5\}|=2<t-1 $, contrary to   Claim~\ref{cl:deg_2}.  Thus  $1\le \ell \le 4$.
 We next show that $e_C(V(B), N_C(A))\geq |B|$. Since $C$ is connected, we have  $e_C(V(B),N_C(A))\geq 1$.  We may assume that $2\le \ell\le4$.  Suppose $d_C(v_1)=1$. By Claim~\ref{cl:deg_1},    $b_H(v_1)=t-2\geq 2$. Then $\ell=2$ and so $e_C(V(B), N_C(A))=b_H(v_1)\ge |B|$. We may assume that 
 $d_C(v_1)\geq 2$ and $d_C(v_\ell)\geq 2$.  Then $e_C(V(B), N_C(A))\geq 2$. 
We may further assume   that $3\le \ell\le4$.  Then $d_C(v_2)=d_H(v_2)=2$ and $d_C(v_{\ell-1})=d_H(v_{\ell-1})=2$.   By Claim~\ref{cl:deg_2}, $b_H(v_2)=t-1\geq 3$ and $b_H(v_{\ell-1})=t-1\geq 3$. It follows  that $e_C(V(B), N_C(A))= b_H(v_2)\ge3=|B|$ when $\ell=3$, and  $e_C(V(B), N_C(A))= (b_H(v_2)-1)+ (b_H(v_{\ell-1})-1)\ge 4 =|B|$ when $\ell=4$. By the arbitrary choice of $B$, this proves that   $e_C(V(B), N_C(A))\geq |B|$ for every component $B$ of $J$. Therefore,  
 \[|C|-|N_C[A]|=|J|\leq e_C(V(J), N_C(A))=e_C(N_C(A), V(J))\leq |N_C(A)|=t-1,\] as desired.
\end{proof}

\begin{claim}\label{cl:C<=G/2}
 Suppose $C$ is a component of $H$ with $|C|\le |H|/2$.  Then $\delta(C)=1$ and $|C|=t+1$. Moreover, $d_H(v)=1$ or $d_H(v)=t-1$ for each $v\in V(C)$.
\end{claim}
\begin{proof}
Let $v\in V(C)$, and let $r:=d_C(v)=d_H(v)$. We claim that $r\neq 2$. Suppose $r=d_C(v)=2$. Then $d_H(v)=d_C(v)=2$. It follows from Claim~\ref{cl:deg_2} that $b_H(v)=t-1$ and $t\in \{4, 7,8\}$. Moreover, $|H|\in \{2t+4, 2t+5\}$ when $t=4$, and $|H|=2t+3$ when $t\in \{7,8\}$. Observe that $|C|\geq 1+r+b_H(v)=1+2+t-1=t+2$.
 Since $t+2\leq |C|\leq |H|/2$, we have $t=4$ and $|H|\in \{2t+4, 2t+5\}$, which  implies that $|C|=6$. By Claim~\ref{cl:deg_1},  $\delta(H)\neq 1$ since   $|H|\neq 2t+3$. Thus $2\le \delta(H)\leq \delta (C)\leq \Delta(C)\leq \Delta(H)\leq t-1=3$.  Let $u_1, u_2$ be the two neighbors of $v$ in $C$ and let $\{w_1, w_2, w_3\}:=V(C)\setminus \{v, u_1, u_2\}$. Recall that $b_H(v)=t-1=3$. Then $N_H(\{u_1, u_2\})|= \{v, w_1, w_2, w_3\}$, and so $u_1u_2\notin E(H)$, else $\{u_1, u_2\}$ is a connected set in $H$ with $|N_H(\{u_1, u_2\})|=4$ and so H contains a $K_{1,4}$ minor, a contradiction.  We may assume that $u_1w_1, u_1w_2\in E(H)$. Then $d_C(u_1)=3$ and $u_2w_3\in E(H)$.  Since $H$ contains no $K_{2,3}$ minor, we see that $w_3$ is not adjacent to both $w_1$ and $w_2$, We may assume that $w_1w_3\notin E(H)$. Since $\delta(H)\ge2$, we see that  $w_3w_2\in E(H)$. Moreover, either    $w_1u_2\in E(H)$ or $w_1w_2\in E(H)$. In the first case we obtain a $K_{2,3}$ minor, and in the latter case $d_H(u_2)=2$ with $b_H(u_2)=|\{u_1, w_2\}|=2<t-1$, contrary to Claim~\ref{cl:deg_2}. By the arbitrary choice of $v$, this proves that no vertex in $C$ has   degree two.

    Suppose next $r\neq 1$.  
Since $|C|\le |H|/2$ and $r+b_H(v)\le |C|-1$, by inequality~\eqref{ineq}, 
  \[\frac{|H|\cdot r}{r+1}< (r+b_H(v))+r+2 \le  (|C|-1)+r+2 \le \frac{|H|}2+r+1,\]
  which implies that 
\begin{equation}\label{ineq:C}
|C|\leq \frac{|H|}2<\frac{(r+1)^2}{r-1}=r+3+\frac 4{r-1} \, \text{ and } \,|C|>\frac{|H|\cdot r}{r+1}-1-r.
\end{equation}

We claim that $r=t-1$. Suppose $r\le t-2$. Then $3\le r\le t-2$ and so $t\ge5$.   Thus $|H|\ge15$ by inequality~\eqref{ineq:H_maxdeg}.
 Suppose  $r=3$. Then the inequalities in~\eqref{ineq:C} imply that $|C|<3+3+2=8$ and  $|C|>(15\cdot 3)/(3+1)-1-3=7+1/4$, which is impossible. Suppose next $r=4$. Then  $t\ge r+2\ge 6$ and $|H|\ge 17$. Moreover, the inequalities in~\eqref{ineq:C} imply that $|C|<4+3+4/3$ and  $|C|>(17\cdot 4)/(4+1)-1-4=8+3/5$, which again is impossible. Thus $r\ge 5$,  and so $t\ge r+2\ge 7$ and $|H|\ge 2t+3\ge 2(r+2)+3=2r+7$.  
 By the inequalities in~\eqref{ineq:C} again, we have 
 \[|C|<r+3+4/(r-1)\le r+4 \, \text{ and }\, |C|>(2r+7)r/(r+1)-1-r=r+4-5/(r+1),\]
 which is a contradiction. This proves that every vertex in $C$ has degree one or degree $t-1$. By Claim~\ref{cl:deg_1}, $\Delta(C)=t-1$. 

 It remains to show that $\delta(C)=1$ and $|C|=t+1$. Let $u\in V(C)$ with $d_H(u)=d_C(u)=t-1$. By applying the second inequality in~\eqref{ineq:C} to the vertex $u$, we have 
 \[|C|> \frac{|H|\cdot(t-1)}{t} -1-(t-1)\geq \frac{(2t+3)(t-1)}t-t=t+1-\frac{3}{t}.\]
Thus $|C|\geq t+1$ because $t\ge4$. Suppose $\delta(C)\ne1$. Then $C$ is $(t-1)$-regular. By Lemma~\ref{lem:regular}, $C$ contains either $K_{1,t}$ or $K_{2, t-1}$ as a minor, a contradiction. This proves that $\delta(C)=1$. By Claim~\ref{cl:deg_1},  $|H|=2t+3$, and so  $t+1\le |C|\le (2t+3)/2$. Thus     $|C|=t+1$, as desired.
  \end{proof}

  \begin{claim}\label{cl:Hcon} $H$ is connected. 
  \end{claim}
  
  \begin{proof}  
  Suppose $H$ is disconnected. Let $C$ be a component of $H$ such that  $|C|\leq \frac{|H|}{2}$. By Claim~\ref{cl:C<=G/2}, $\delta(C)=1$ and $|C|=t+1\ge5$, and for every $v\in V(C)$, either $d_C(v)=1$ or $d_C(v)=t-1$. Let $v\in V(C)$ with $d_C(v)=\delta(C)=1$. Let $w$ denote the unique neighbor of $v$. By Claim~\ref{cl:deg_1},  $d_C(w)=t-1$. Since $|C|=t+1$, let $x$ denote the unique vertex in $V(C)\setminus N_C[w]$. Observe that $N_C(x)\subseteq N_C[w]\setminus \{v, w\}$. Thus  $d_C(x)\leq t-2$ and so $d_C(x)=1$. Let $y\in N_C[w]\setminus \{v, w\}$ be the unique neighbor of $x$. Then   $d_C(y)=t-1$ by Claim~\ref{cl:deg_1}. This implies that $y$ is adjacent to every vertex in $N_C(w)\setminus\{v,y\}$. Consequently, every $z\in N(w)\setminus\{v,y\}$ is adjacent to both $w$ and $y$. Moreover, $z$ has at most   $\bigl|V(C)\setminus\{z,v,x\}\bigr|=t-2 $ neighbors in $C$,   contrary to the fact that $d_C(z)=t-1$. 
  \end{proof}

  By Claim~\ref{cl:Hcon}, $H$ is connected.   Suppose $1\leq \delta(H)\leq 2$. By Claim~\ref{cl:deg_1}  and Claim~\ref{cl:deg_2}, there exists a connected set $A\subseteq V(H)$ with $|A|\leq 3$ such that $|N_H(A)|\le t-1$. Then $|N_H[A]|=|A|+|N_H(A)|\leq t+2$. By Claim~\ref{cl:special_A}, $|H|-|N_H[A]|\leq t-1$ and so  $|H|\leq 2t+1$, a contradiction. This proves that $\delta(H)\geq 3$.
  By Lemma~\ref{lem:cone-minors}, $H$ contains no spanning tree with at least $t$ leaves. By Theorem~\ref{t:mindeg5}, $3\le \delta(H)\le 4$. Let  $v\in V(H)$ with $d_H(v)=\delta(H)$. Applying  the inequality~\eqref{ineq} to the vertex  $v$, we see that $4\le t\le 9$ when $\delta(H)=3$,  and $4\le t\le 10$ when $\delta(H)=4$. Suppose $\delta(H)=4$.  By Theorem~\ref{t:mindeg=4}, $H$ contains a spanning tree with at least $(2|H|+8)/5\ge (4t+14)/5\ge t$ leaves because $t\le 10$, contrary to Lemma~\ref{lem:cone-minors}.  Thus  $\delta(H)=3$.   If $b_H(v)=t-1$, by Claim~\ref{cl:special_A} applied to $N_H[v]$, $|H|-4-(t-1)\le t-1$, contrary to the fact that $|H|\ge 2t+3$.  Thus $b_H(v)\le t-2$. By  the  inequality~\eqref{ineq} applied to the vertex $v$, we have $4\le t\le7$.  
By Theorem~\ref{t:mindeg=3} and the inequality~\eqref{ineq:H_maxdeg} applied to $|H|$, $H$ contains a spanning tree with at least $\lceil (|H|+8)/4\rceil\ge   t$ leaves, contrary to Lemma~\ref{lem:cone-minors}. 

This completes the proof of Lemma~\ref{lem:H}.
\end{proof}

 We now apply Lemmas~\ref{lem:key} and~\ref{lem:H} to prove Theorem~\ref{t:maxdeg}.
 
\begin{proof}[Proof of Theorem~\ref{t:maxdeg}]  Let $t\ge 4$ be an integer and let $G$ be a connected positively Lin--Lu--Yau curved graph with $\delta(G)\ge2$ and  no $K_{2,t}$ minor. Suppose 
\begin{equation*}
\Delta(G)\ge
\begin{cases}
2t+4 & \text{ if } t=4,\\
2t+5&  \text{ if } t=5,6\\
2t+3 &  \text{ if }t\ge7.
\end{cases}
\end{equation*}
By Lemma~\ref{lem:key}, there exists a  graph $H$ with $|H|=\Delta(G) \ge 2t+3$ and $\delta(H)\ge1$ such that $K_1+ H$ is a minor of $G$ and every $v\in V(H)$ satisfies
\[\frac{|H|\cdot d_H(v)}{d_H(v)+1}<2d_H(v)+2+b_H(v). \]
Since 
\begin{equation*}
|H|=\Delta(G)\ge
\begin{cases}
2t+4 & \text{ if } t=4\\
2t+5&  \text{ if } t=5,6\\
2t+3 &  \text{ if }t\ge7,
\end{cases}
\end{equation*}
it follows from Lemma~\ref{lem:H} that $H$ contains $K_{1,t}$ or $K_{2, t-1}$ as a minor. Thus  $K_1+H$ contains a $K_{2,t}$ minor and so  $G$ contains a $K_{2,t}$  minor,  a contradiction.
\end{proof}

\section{Proof of Theorem~\ref{t:main} for \texorpdfstring{$t=3$}{t = 3}}\label{s:t=3}
For $t=3$, we give a characterization of positively Lin--Lu--Yau curved non-outerplanar graphs with minimum degree at least two and no $K_{2,3}$ minor. Recall that a graph is \dfn{outerplanar} if it has neither $K_{2,3}$ nor $K_4$ as a minor. Liu, Lu, and Wang~\cite{LiuLuWang2024} characterized all positively Lin--Lu--Yau curved outerplanar graphs with minimum degree at least two. 
\begin{thm}[Liu, Lu, Wang \cite{LiuLuWang2024}]
 Every connected positively Lin--Lu--Yau curved outerplanar graph $G$ with $\delta(G)\ge 2$ has $\Delta(G)\leq 9$ and $|G|\le 10.$   Both bounds are sharp.
\end{thm}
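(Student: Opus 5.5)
This theorem of Liu, Lu and Wang is a cited result. The paper does not prove it; it imports it from~\cite{LiuLuWang2024} and uses it as a black box for the outerplanar case of Theorem~\ref{t:main} with $t=3$. If I had to reconstruct an argument, I would follow the same two-stage strategy as Section~\ref{s:maxdeg}: first bound the maximum degree, then bound the number of vertices using the local structure forced by positive curvature.

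\textbf{Degree bound.} Fix a vertex $u$ of degree $D$ and a neighbor $v$. The test function $f(x)=\min\{d(x,A),2\}$ from the proof of Lemma~\ref{lem:key} gives
\[
\frac D2\le D\Bigl(1-\frac1{d(v)}\Bigr)<2c+2+b,
\]
where $c=|N(v)\cap N(u)|$ and $b=|N(A)\cap N(u)|$. For outerplanar $G$, the graph $G[N(u)]$ together with the contracted second neighborhood is a linear forest; this is Lemma~\ref{lem:cone-minors_1} with $t=3$. Hence $c\le 2$ and $b\le 2$. Moreover, equality cases force $d(v)=2$ and only constrain vertices at the ends of the path. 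A careful count over all $v\in N(u)$, choosing $v$ to be a middle vertex of a long path of the linear forest, should give $D\le 9$.

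\textbf{Vertex bound.} Next I would decompose $G$ via its outerplanar embedding into blocks and the weak dual tree of each block. The main step is to show that positive curvature on the edges of a leaf triangle or chord prevents long "fans" and long chains of degree-2 vertices. For this I would test functions on pairs $x,y$ with $d(x)=2$. The main obstacle is the case analysis for the total vertex count $\le 10$. I would expect to show that $G$ has diameter at most 2 around a high-degree vertex, and then enumerate the possibilities, as in~\cite{BOSWY}.
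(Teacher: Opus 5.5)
You are right that the paper does not prove this statement: it is quoted from Liu--Lu--Wang and used only as a black box for the outerplanar case of Theorem~\ref{t:main} with $t=3$. As a standalone argument, though, your sketch has genuine gaps.

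\textbf{Degree bound.} The inequality $D(1-1/d(v))<2c+2+b$ from Lemma~\ref{lem:key} is the right tool, but you apply it to the wrong vertex. If $v$ is an interior vertex of a long path component of $G[N(u)]$ with $d(v)=3$, then $c=2$ and $b$ can equal $2$. The inequality then only reads $2D/3<8$, i.e.\ $D\le 11$. Concretely, in the fan $K_1+P_{11}$ with path $p_1\cdots p_{11}$, the vertices $p_3,\dots,p_9$ all satisfy it, so interior vertices cannot rule out $D=11$.

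What works is an \emph{endpoint} $v$ of a path component:
\begin{enumerate}[(a)]
\item If the component is a single vertex, then $c=0$ and $b\le 2$, so $D/2<4$.
\item If $d(v)\ge 3$, then $c=1$ and $b\le 2$, so $2D/3<6$.
\item If $d(v)=2$, let $x$ be its neighbor in $N(u)$. Then $A=\{v,x\}$, and $x$ has at most one further neighbor in $N(u)$. Hence $b\le 1$ and $D/2<5$.
\end{enumerate}
In every case $D\le 9$.

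A smaller point: Lemma~\ref{lem:cone-minors_1} with $t=3$ only excludes $K_{1,3}$ and $C_4$ minors, which still allows triangle components. To conclude that $G[N(u)]$ is a linear forest you also need the absence of a $K_4$ minor, i.e.\ outerplanarity.

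\textbf{Vertex bound.} This is where the real content of the cited theorem lies, and your proposal gives no argument for it; nothing in the sketch bounds $|G|$ at all.
\begin{enumerate}[(a)]
\item The hoped-for statement that $G$ has ``diameter at most 2 around a high-degree vertex'' is not justified. Even if it held, it would only give $|G|\le 1+9+9\cdot 8$.
\item Deferring the rest to an enumeration ``as in'' \cite{BOSWY} does not suffice, since that work treats only maximal outerplanar graphs.
\item Generic tools do not reach $10$. The spectral/treewidth route of Section~\ref{s:t=4}, with treewidth at most $2$, yields only $|G|\le 3\bigl(\Delta^2(\Delta-1)/2+1\bigr)\le 975$.
\end{enumerate}
Obtaining exactly $10$ requires the structural characterization of positively curved outerplanar graphs with $\delta\ge 2$ carried out by Liu, Lu and Wang, which your plan does not supply.

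\textbf{Sharpness.} You also do not address sharpness. The fan $K_1+P_9$ attains both bounds. Its tightest edge joins the apex to an end of the path, with curvature $1/18$. In $K_1+P_{10}$ the same edge already has nonpositive curvature.
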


Moreover,  every graph containing a $K_4$ minor either contains a subgraph isomorphic to $K_4$ or contains a $K_{2,3}$ minor. This observation leads to the following lemma. 

\begin{lem}\label{Lem:K23-minor}
Every $2$-connected graph with no $K_{2,3}$ minor is either outerplanar or isomorphic to $K_4$.    
\end{lem}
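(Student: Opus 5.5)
The plan is to let $G$ be $2$-connected with no $K_{2,3}$ minor and not outerplanar, and to show $G\cong K_4$. Since outerplanar means having neither a $K_{2,3}$ nor a $K_4$ minor, $G$ must have a $K_4$ minor. By the observation just before the lemma, $G$ then contains $K_4$ as a subgraph. It remains to show that $G$ has no vertex outside this $K_4$; the extra edges among the four vertices are already present, so $G$ will then be exactly $K_4$.

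I would first justify the observation, since it carries the argument. Take a $K_4$ minor with branch sets $X_1,\dots,X_4$ chosen so that $\sum|X_i|$ is minimal. If every $|X_i|=1$ we have a $K_4$ subgraph. Otherwise, say $|X_1|\ge2$. Minimality means no vertex of $X_1$ can be dropped while $X_1$ stays connected and still has a neighbour in each of $X_2,X_3,X_4$. From this one extracts either a $K_{2,3}$ minor or a smaller model of $K_4$. The standard way to do this is to take a minimal subtree $T\subseteq G[X_1]$ touching $X_2,X_3,X_4$. This $T$ is either a path whose ends attach to two different sets, or a subdivided star with three leaves, one attaching to each set.
\begin{itemize}
\item If $T$ is a path $a\ldots b$ with an internal vertex meeting the third set, then contracting the path halves onto $a$ and $b$ gives branch sets $\{a\}$-part and $\{b\}$-part. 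Together with $X_2,X_3,X_4$ these form a $K_{2,3}$ after suitable contraction, because $X_2,X_3,X_4$ are pairwise adjacent and each sees $X_1$.
\item If $T$ is a subdivided star, then its center together with the three legs, contracted into $X_2,X_3,X_4$, exhibits the two sides $\{c, X_j\}$ and $\{X_i,X_k,\ldots\}$ of a $K_{2,3}$.
\end{itemize}
Making this case analysis clean is the main obstacle. An alternative I would also consider is to invoke the known fact that a graph with no $K_{2,3}$ minor and with a $K_4$ minor contains a $K_4$ subgraph, via $K_4$ being a subdivision minor: $K_4$ has maximum degree $3$, so a $K_4$ minor yields a topological $K_4$, that is, a subdivision. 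If any edge of that subdivision is subdivided, say the path from $v_1$ to $v_2$ has an internal vertex $w$, then $\{v_1,v_2\}$ on one side and $\{w,v_3,v_4\}$ on the other, after contracting segments suitably, give a $K_{2,3}$ minor. Hence the subdivision is $K_4$ itself. This topological route is cleaner and I would use it.

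Finally, suppose some vertex $x\notin K=\{v_1,\dots,v_4\}$. By $2$-connectivity (Menger / fan lemma) there are two paths from $x$ to distinct vertices of $K$, say $v_1,v_2$, that are internally disjoint and otherwise avoid $K$. Then $x$, $v_3$ and $v_4$ are each adjacent, via contractible paths, to both $v_1$ and $v_2$. This gives a $K_{2,3}$ minor with parts $\{v_1,v_2\}$ and $\{x,v_3,v_4\}$, a contradiction. Hence $V(G)=K$ and $G\cong K_4$.
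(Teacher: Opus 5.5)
Your proof is correct and follows the route the paper indicates. Non-outerplanarity forces a $K_4$ minor; the observation stated just before the lemma upgrades this to a $K_4$ subgraph; and $2$-connectivity, via the fan lemma, then rules out any fifth vertex. The paper leaves both of those last steps implicit. Your topological-subdivision argument, using $\Delta(K_4)=3$, validly justifies the observation, whereas the minimal-branch-set sketch you set aside is incomplete (e.g.\ it misses the case where an end of the path meets two of the other branch sets).
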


 Let $G\ne K_4$ be a connected positively Lin-Lu-Yau curved graph with $\delta(G)\geq 2$ and no $K_{2,3}$ minor. By Lemma~\ref{Lem:K23-minor}, either $G$ is outerplanar or $G$ contains a cut vertex. Let $B$ be a block of $G$. We call a vertex of $B$ \emph{active} if it is a cut vertex of $G$. By Lemma~\ref{Lem:K23-minor}, $G$ contains a block isomorphic to $K_4$ if and only if  $G$ is not outerplanar. 

 For an integer $n\ge1$, let $P_n$ denote the path on $n$ vertices. The \emph{fan graph} $F_n$ is the join $K_1+P_n$. The vertex corresponding to $K_1$, which is adjacent to every vertex of $P_n$, is called the \emph{center} of $F_n$.
\begin{lem}\label{lem:K4_block}
 Let $G$ be a connected positively Lin--Lu--Yau curved graph with $\delta(G)\ge 2$ and no $K_{2,3}$ minor, and suppose that $G$ contains a block $B$ isomorphic to $K_4$.  Then the following hold.
 \begin{enumerate}[(i)]
 \item  At most one vertex of $B$ is active.
 \item   Suppose that $v$ is the unique active vertex of $B$ incident with a block $C$ of $G$, where $C\ne B$. Then $d_G(r)\leq 8$ and either $C\cong K_4$  or $C\cong K_1+P_{\ell}$ with $2\le \ell\le 5$ is a fan graph with $v$ as its center. Moreover, $v$ is the unique active vertex of $C$.  
 \end{enumerate}
\end{lem}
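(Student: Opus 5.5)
The plan is to prove everything with the test-function upper bound from Theorem~\ref{thm:curvature_laplacian}. For an edge $xy$ and any integer-valued $1$-Lipschitz $f$ on $N[x]\cup N[y]$ with $f(y)-f(x)=1$, positivity gives
\[
0<\kappa_{\textrm{LLY}}(x,y)\le \LL f(x)-\LL f(y).
\]
Each structural restriction should come from exhibiting a bad $f$. The block structure makes such $f$ easy to build: vertices in different blocks at a cut vertex $v$ are at distance $2$ through $v$ and cannot be closer. Throughout, write $B=\{v,a,b,c\}\cong K_4$, and use Lemma~\ref{Lem:K23-minor} to know that every block other than a $K_4$ is outerplanar. (I read ``$d_G(r)$'' in (ii) as $d_G(v)$.)

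\textbf{Step 1: a cut-edge lemma.} First show that $G$ has no bridge $vy$ with $d(v)\ge 4$ and $d(y)\ge 2$. Put $f(v)=0$, $f(y)=1$, $f=-1$ on $N(v)\setminus\{y\}$ and $f=2$ on $N(y)\setminus\{v\}$. This is $1$-Lipschitz because the two sets are at distance $3$. It gives
\[
\LL f(v)-\LL f(y)=\frac{1-(d(v)-1)}{d(v)}-\frac{(d(y)-1)-1}{d(y)}\le -\frac12<0,
\]
so such a bridge cannot exist. Consequently, if $v\in B$ is active, each other block $C$ at $v$ is $2$-connected, hence $C\cong K_4$ or $C$ is $2$-connected outerplanar. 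In particular $v$ has at least two neighbours in $C$.

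\textbf{Step 2: part (ii).} Let $y\in N_C(v)$. Set $f(v)=0$, $f(y)=1$, $f\equiv -1$ on $\{a,b,c\}$, and $f=2$ on $N(y)\setminus N[v]$. On common neighbours of $v$ and $y$ (all of which lie in $C$) set $f=0$. This is Lipschitz since $\{a,b,c\}$ is at distance $\ge 2$ from everything in $C$ other than $v$. Writing $c_y:=|N(v)\cap N(y)|$, positivity becomes an inequality of the form
\[
\frac{1-3+(\text{contribution of }N_C(v))}{d(v)}>\frac{-(c_y+1)+(d(y)-c_y-1)}{d(y)},
\]
in the same spirit as~\eqref{eq:local}. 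The ``$-3$'' from $B$ forces $v$ to have many common neighbours with each $y\in N_C(v)$ relative to $d(v)$. Because $C$ is outerplanar, $C[N_C(v)]$ is a linear forest. If $C\not\cong K_4$, I will use this inequality to show that $N_C(v)$ induces a single path $P_\ell$ and $V(C)=N_C[v]$, i.e.\ $C\cong K_1+P_\ell$ with centre $v$. A vertex of $C$ at distance $2$ from $v$, or a gap in the path, would produce some $y$ with $c_y\le 1$ and $d(y)\ge 3$, violating the inequality. Taking $y$ an endpoint of the path (where $c_y=1$) then bounds $\ell\le5$, and summing over all blocks at $v$ gives $d(v)\le 8$. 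For the claim that $v$ is the only active vertex of $C$, assume $y\in C\setminus\{v\}$ is active. Apply the same type of function to the edge $vy$, now putting $-1$ on $B\setminus\{v\}$ and $2$ on the neighbours of $y$ in the extra block at $y$; these extra neighbours number at least $2$ by Step~1. This should drive the curvature nonpositive.

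\textbf{Step 3: part (i).} Suppose $v,w\in B$ are both active, with outside neighbour sets $X\ni$ and $Y$ (each of size $\ge 2$ by Step~1, and $d(X,Y)\ge3$). On the edge $vw$, use $f(v)=0$, $f(w)=1$, $f|_X=-1$, $f|_Y=2$, and choose $f(a),f(b)\in\{0,1\}$. This alone only yields the constraint $|X|,|Y|$ small. I would therefore combine it with the Step~2 inequality applied at $v$ and at $w$, which say each outside block is a small fan or $K_4$. This reduces part (i) to a finite check of fan/$K_4$ blocks at $v$, where some edge $vx$ with $x\in X$ an endpoint of the fan fails positivity once the second active vertex $w$ contributes an extra ``$-1$''-type neighbour.

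\textbf{Expected obstacle.} The main difficulty is the tuning of $f$ on common neighbours and the resulting case analysis in Steps~2--3. The bound $\ell\le5$ and $d(v)\le8$ must come out exactly, so I expect to enumerate small fans and multiple blocks at $v$ explicitly rather than rely on one clean inequality.
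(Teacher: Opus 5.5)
Your general strategy is the right one: test functions with $-1$ on $V(B)\setminus\{v\}$ and $2$ on far neighbours, and your Step~1 on bridges is fine. But the bounds $d_G(v)\le 8$ and $\ell\le 5$ in (ii) cannot be obtained the way you propose. At an endpoint $y$ of the fan path you have $c_y=1$ and $d(y)=2$, so the right-hand side of your inequality is $-1$ and the inequality holds trivially.

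More decisively, no edge from $v$ into $C$ gives $\ell\le 5$. In $K_1+(K_3\sqcup P_6)$ with path $p_1\cdots p_6$, one computes $\kappa_{\textrm{LLY}}(v,p_1)=\frac1{18}$, $\kappa_{\textrm{LLY}}(v,p_2)=\frac19$ and $\kappa_{\textrm{LLY}}(v,p_3)=\frac29$, and the rest follow by symmetry. So all these edges are positively curved, and what fails in that graph is the edge $va$ with $a\in V(B)\setminus\{v\}$.

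That is the missing idea. Since $v$ is the unique active vertex of $B$, $d_G(a)=3$. Take $f=0$ on $V(B)\setminus\{v\}$, $f(v)=1$ and $f=2$ on $N_G(v)\setminus V(B)$. This gives $\LL f(a)=\frac13$ and $\LL f(v)=1-\frac{6}{d_G(v)}$. Then $0<\frac13-1+\frac6{d_G(v)}$ forces $d_G(v)\le 8$, and hence $\ell=d_C(v)\le d_G(v)-3\le 5$, with no summing over blocks.

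Your choice of $f=0$ on common neighbours is also the wrong tuning. It adds $-c_y$ to $d(y)\LL f(y)$, and your claim that $c_y\le 1$, $d(y)\ge 3$ violates positivity then does not follow. For example, glue $K_4$ at $v$ to the $5$-cycle $vzywuv$ with the chord $vy$. Then $d(v)=6$, $d(y)=3$ and $c_y=1$. The Lipschitz condition forces $f(u)=1$, since $uw\in E(G)$ and $f(w)=2$. Your function therefore only yields $\LL f(v)-\LL f(y)=-\frac16+\frac13>0$.

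The paper instead puts $f=1$ on the common neighbours of $v$ and $y$, and also on $N_C(v)\cap N_C(w)$ when $w$ is a neighbour of $y$ outside $N[v]$. This makes $\LL f(y)\ge 0$, and strictly positive when $y$ is active. Meanwhile outerplanarity of $C$ caps the $+1$ terms in $d(v)\LL f(v)$ at $3$, and these are cancelled by the $-3$ from $V(B)\setminus\{v\}$. This shows first that no $y\in N_C(v)$ is active, and then that $N_G(y)\subseteq N_C[v]$, so $C$ is a fan centred at $v$.

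Finally, for (i) you dismissed your own function too early. With $f=0$ on the two remaining vertices of $B$ it gives
$\kappa_{\textrm{LLY}}(v,w)\le \frac{1-|X|}{3+|X|}-\frac{|Y|-3}{3+|Y|}\le -\frac15+\frac15=0$
as soon as $|X|,|Y|\ge 2$. This is exactly the paper's proof, so the detour through Step~2 and a finite case check is unnecessary, and as sketched it would inherit the problems above.
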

\begin{proof} Let $G,B$ be given in the statement. Suppose $v,a$ are two distinct active vertices of $B$.  Note that $va\in E(B)$ as $B\cong K_4$. Since $G$ is positively Lin--Lu--Yau curved with $\delta(G)\geq 2$, we know that $G$ cannot have a cut edge, and so all blocks of $G$ are $2$-connected. Then each of $v,a$ have at least two neighbors outside of $B$, and hence $d_G(v)\geq 5,  d_G(a)\geq 5$. Observe that for any vertex  $u\in N_G(v)\setminus V(B)$ and any vertex $w\in N_G(a)\setminus V(B)$, the distance between $u$ and $w$ is three. Otherwise, this $uw$-path of length at most two contains no vertex in $V(B)\setminus \{a, v\}$ and contracting this path together with $B$ gives a $K_{2,3}$ minor.  

Consider the edge $av$, and let $f: N[a]\cup N[v] \to \mathbb{Z}$ be defined as follows. 
\[f|_{V(B)\setminus\{ v\}}=0, \quad f(v)=1,  \quad f|_{N_G(a)\setminus V(B)}=-1, \quad f|_{N_G(v)\setminus V(B)}=2.\]
It follows that $f\in Lip(1)$ with $f(a)=0, f(v)=1$ and
\[\LL f(a)=\frac{-(d_G(a)-3)+1}{d_G(a)}=\frac{4}{d_G(a)}-1, \quad \LL f(v)=\frac{-3+(d_G(v)-3)}{d_G(v)}=1-\frac{6}{d_G(v)},\]
and $\kappa_{\textrm{LLY}}(a,v)\leq \LL f(a)-\LL f(v)=\frac{4}{d_G(a)}+\frac{6}{d_G(v)}-2\leq \frac{4}{5}+\frac{6}{5}-2=0,$ a contradiction. Hence, $B$ has at most one active vertex. This proves (i).

Suppose $v$ is the unique active vertex of $B$. We first prove that $v$ has degree at most $8$. Let $a\in V(B)$ with $a\neq v$. By (a), we know that $a$ is not active, and so $d_G(a)=d_B(a)=3$. Consider the edge $va$, and $1$-Lipschitz function $f: N[a]\cup N[v] \to \mathbb{Z}$ with 
\[f|_{V(B)\setminus\{ v\}}=0, \quad f(v)=1,  \quad f|_{N_G(v)\setminus V(B)}=2.\]
Then
\[\LL f(a)=\frac{1}{3}, \quad \LL f(v)=\frac{-3+(d_G(v)-3)}{d_G(v)}=1-\frac{6}{d_G(v)}.\]
$0<\kappa_{\textrm{LLY}}(a,v)\leq \LL f(a) -\LL f(v)=\frac{1}{3}-1+\frac{6}{d_G(v)}$ implies that $d_G(v)\le 8$.

Suppose $v$ is incident with a block $C$ of $G$ such that $C\neq B$. Suppose $C\cong K_4$. Then by (i), $v$ is the unique active vertex of $C$. Assume $C$ is not $K_4$. Then $C$ is outerplanar. We want to show that $C$ is a fan graph with center $v$ and $v$ is the unique active vertex of $C$. We claim that if $x$ is a neighbor of $v$ in $C$, then $N_G(x)\subseteq N_C(v)$. We first show that $x$ is not active in $C$. Suppose not. Then $x$ has at least two neighbors outside of $C$. Consider the edge $vx$, and let $f: N[v]\cup N[x] \to \mathbb{Z}$ be the function with 
\[f|_{\{v\} \cup  (N_C(v)\setminus N_C[x])}=0, \quad f|_{N_C[x]\setminus \{v\}}=1,  \quad f|_{ V(B)\setminus \{v\}}=-1, \quad f|_{N_G(x)\setminus V(C)}=2.\]
Since $C$ is outerplanar, $v$ and $x$ have at most two common neighbors in $C$.
Observe that 
\[\LL f(r)=\frac{-3+1+|N_C(v)\cap N_C(x)|}{d_G(v)}\leq 0, \quad \LL f(x)=\frac{-1+|N_G(x)\setminus V(C)|}{d_G(x)}>0,\]
and $\kappa_{\textrm{LLY}}(v,x)\le \LL f(v)-\LL f(x)<0$, contradicting that $\kappa_{\textrm{LLY}}(v,x)>0$. Therefore, $x$ is not active in $C$, and so $N_G(x)=N_C(x)$. Now we show that $N_C(x)\subseteq N_C(v)$. Suppose $x$ has a neighbor $y$ in $C$ such that $vy\notin E(G)$. Note that $x\in N_C(y)\cap N_C(v)$, and since $C$ is outerplanar, we have $|N_C(v)\cap N_C(x)|+|N_C(v)\cap N_C(y)\setminus N_C(x)|\leq 3$. Let $f: N[v]\cup N[x] \to \mathbb{Z}$ be a $1$-Lipschitz function such that 
\[f|_{\{v\} \cup  (N_C(v)\setminus (N_C(x)\cup N_C(y)))}=0, \quad f|_{(N_C(x)\setminus \{v, y\})\cup (N_C(v) \cap N_C(y))}=1,  \quad f|_{ V(B)\setminus \{v\}}=-1, \quad f(y)=2.\]
Then
\[\LL f(v)\le \frac{-3+|N_C(v)\cap N_C(x)|+|N_C(v)\cap N_C(y)\setminus N_C(x)|}{d_G(v)}\leq 0, \quad \LL f(x)=\frac{-1+1}{d_G(x)}=0,\]
contradicting that $\kappa_{\textrm{LLY}}(v,x)>0$. Thus, $N_G(x)=N_C(x)\subseteq N_C(v)$. Since $C$ is outerplanar, we obtain that $C$ is a fan graph $K_1+P_\ell$ for some integer $\ell$ with center $v$, and $v$ is the unique active vertex of $C$ since $N_C[v]=V(C)$ and all neighbors of $v$ in $C$ cannot be active in $C$. Note that since $d_G(v)\le 8$ and $d_C(v)\geq 2$, we obtain that $2\le d_C(v)=\ell\leq d_G(v)-d_B(v)\leq 8-3=5$ and hence $2\le \ell \le 5$.
This completes the proof of Lemma~\ref{lem:K4_block}.
\end{proof}
\begin{thm}\label{thm:K_23}
Let $G$ be a connected positively Lin-Lu-Yau curved graph with $\delta(G)\geq 2$ and no $K_{2,3}$ minor. Suppose $G$ is not outerplanar. Then $G$ is isomorphic to one of the following graphs: \[
\begin{gathered}
K_4,\qquad K_1+2K_3,\qquad K_1+(K_3\sqcup 2P_2), \qquad
K_1+(K_3\sqcup P_\ell)\quad (2\le \ell\le5).
\end{gathered}
\] In particular, $\Delta(G)\leq 8$ and $|G|\leq 9$. Both bounds are attained by $K_1+(K_3\sqcup P_5)$.
\end{thm}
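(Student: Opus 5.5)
The plan is to use the block structure supplied by Lemma~\ref{Lem:K23-minor} and Lemma~\ref{lem:K4_block}. Since $G$ is not outerplanar, Lemma~\ref{Lem:K23-minor} gives a block $B\cong K_4$. If $G=B$ we are done. Otherwise $B$ has an active vertex, and by Lemma~\ref{lem:K4_block}(i) it has exactly one, call it $v$. Every other vertex of $B$ has degree $3$ in $G$.

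Next, let $C\ne B$ be any block containing $v$. By Lemma~\ref{lem:K4_block}(ii), $v$ is the unique active vertex of $C$, and $C$ is either $K_4$ or a fan $K_1+P_\ell$ with center $v$ and $2\le\ell\le5$. In particular, no vertex of $C$ other than $v$ is a cut vertex. Every block of $G$ containing $v$ therefore has $v$ as its only cut vertex, so no further blocks hang off them. Connectivity then forces every block of $G$ to contain $v$, and
\[
G=K_1+(H_1\sqcup\cdots\sqcup H_k),
\]
where $v$ is the cone vertex, $H_1\cong K_3$ comes from $B$, and each other $H_i$ is $K_3$ or a path $P_\ell$ with $2\le\ell\le5$. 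Note that $\delta(G)\ge2$ excludes $P_1$. The degree bound $d_G(v)\le 8$ from Lemma~\ref{lem:K4_block}(ii) gives $\sum_i|H_i|\le 8$. Since $|H_1|=3$, the remaining parts have at most $5$ vertices in total. The candidates for $k\ge2$ are therefore: $2K_3$; $K_3\sqcup P_\ell$ with $2\le\ell\le5$; $K_3\sqcup 2P_2$; $K_3\sqcup P_2\sqcup P_3$; and $K_3\sqcup K_3\sqcup P_2$ (which has $d(v)=8$).

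The remaining task is to test each candidate against positive curvature using Theorem~\ref{thm:curvature_laplacian} and discard $K_3\sqcup P_2\sqcup P_3$ and $2K_3\sqcup P_2$. For the edge $va$ with $a\in B$, the test function from the proof of Lemma~\ref{lem:K4_block} already gives the condition $\frac13-1+\frac6{d(v)}>0$. The sharper test will come from an edge $vx$ with $x$ an endpoint of a short path component. Set $f(x)=1$, $f(v)=0$, put $f=1$ on the path-neighbor of $x$, put $f=-1$ on all vertices of the other components, and put $f=0$ on the rest of $x$'s own path. This yields
\[
\kappa(v,x)\le \LL f(v)-\LL f(x)=\frac{2-(d(v)-\ell)}{d(v)}-\frac{0}{d(x)}\quad\text{roughly},
\]
which is negative once $d(v)$ is large relative to $\ell$. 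This should kill $K_3\sqcup P_2\sqcup P_3$ and $2K_3\sqcup P_2$.

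For the surviving graphs, positivity should be confirmed by computing the LLY curvature of each edge type, which is a finite check. The main obstacle is this last case analysis: getting the test functions exactly right so that they exclude precisely the two bad candidates, and confirming positivity for the listed graphs, especially the borderline case $K_1+(K_3\sqcup P_5)$ with $d(v)=8$. For the positivity direction I would first try to exhibit explicit optimal transport plans, which give lower bounds on $\kappa$. Finally, the bounds $\Delta(G)\le8$ and $|G|\le9$ follow by inspecting the list.
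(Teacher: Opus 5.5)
Your reduction matches the paper's. Lemma~\ref{lem:K4_block} makes every block a cone with apex $v$, so $G=K_1+(K_3\sqcup H_2\sqcup\cdots\sqcup H_k)$ with $d_G(v)\le 8$, and your list of candidates is correct. The gap is in the one step that does real work: eliminating $K_3\sqcup P_2\sqcup P_3$ and $2K_3\sqcup P_2$. Your test function is the right one (it is exactly the paper's), but you evaluated it incorrectly. Since $x$ is an endpoint of its path, $d_G(x)=2$, with neighbors $v$ (value $0$) and its path-neighbor (value $1$). Hence $\LL f(x)=\frac12\bigl((0-1)+(1-1)\bigr)=-\frac12$, not $0$. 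This is not cosmetic. The bound you wrote, $\kappa(v,x)\le\frac{2-(d(v)-\ell)}{d(v)}$, is negative whenever $d(v)>\ell+2$. That holds for every $K_1+(K_3\sqcup P_\ell)$ on the target list, since there $d(v)=\ell+3$. So, as written, your argument would also eliminate graphs the theorem keeps, including $K_1+(K_3\sqcup P_5)$, which really is positively curved.

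With the correct value you get $\kappa(v,x)\le\frac{\ell+2}{d(v)}-\frac12$, which is $\le 0$ only if $d(v)\ge 2\ell+4$. Given $d(v)\le 8$, this bites only for $\ell=2$. If some block at $v$ is a triangle, then $\kappa(v,x)\le\frac{4}{d(v)}-\frac12$, and positivity forces $d(v)\le 7$. Both bad candidates contain a $P_2$ and have $d(v)=8$, so they are eliminated, while every graph on the list survives this test. This is precisely the paper's argument: it derives $d_G(v)\le 7$ when a triangle block is present, then separates the cases with and without one.

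Positivity of the surviving graphs is not part of the classification. The paper only asserts, without proof, that $K_1+(K_3\sqcup P_5)$ attains the bounds. Your proposed finite check does succeed there: the smallest edge curvature is $\frac{1}{12}$, on the three edges from $v$ to the triangle. But that check was not the missing piece.
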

\begin{proof}
Let $G$ be connected positively Lin-Lu-Yau curved graph with $\delta(G)\geq 2$ and no $K_{2,3}$ minor such that $G$ is not outerplanar. Suppose $G$ is $2$-connected. It follows from Lemma~\ref{Lem:K23-minor} that $G\cong K_4$. We may now assume $G$ is not $2$-connected. Note that $G$ is $2$-edge connected as $G$ is positively Lin-Lu-Yau curved. 
Consider all blocks of $G$. Since $\delta(G)\geq 2$ and $G$ is $2$-edge connected, every block of $G$ is $2$-connected. Moreover, for any distinct $x,y$ in a block $B$ of $G$, there exist exactly two internally vertex-disjoint $xy$-paths in $G$, i.e., $G\setminus (V(B)\setminus \{x,y\})$ has no $xy$-path. Otherwise, it gives a $K_{2,3}$ minor of $G$. Since $G$ is not outerplanar, this implies $G$ has a block $B$ such that $B$ is not outerplanar and so $B\cong K_4\cong K_1+K_3$. Note that $B$ has at least one active vertex as $G$ is not $2$-connected. By Lemma~\ref{lem:K4_block}(i), $B$ has a unique active vertex, say $v$. Let $C\neq B$ be a block of $G$ such that $v\in V(C)$. It follows from   Lemma~\ref{lem:K4_block}(ii) that $d_G(v)\leq 8$, and either $C\cong K_4$  or $C\cong K_1+P_{\ell}$ with $2\le \ell\le 5$ is a fan graph with $v$ as its center. Moreover, $v$ is the unique active vertex of $C$. 

Suppose $d_C(v)=2$. This implies that $C\cong K_1+P_2$ is a triangle. We show that $d_G(v)\leq 7$. Let $x\in V(C)$ with $x\neq v$. Consider the edge $vx$ and let $f: N[v]\cup N[x]\to \mathbb{Z}$ be a $1$-Lipschitz function with
\[f(v)=0, \quad f|_{V(C)\setminus \{v\}}=1, \quad f|_{N_G(v)\setminus V(C)}=-1. \]
Then
\[\LL f(v)=\frac{-(d_G(v)-2)+2}{d_G(v)}=\frac{4}{d_G(v)}-1, \quad \LL f(x)=-\frac{1}{2},\]
and $0<\kappa_{\textrm{LLY}}(v,x )\leq \LL f(v) -\LL f(x)=\frac{4}{d_G(v)}-\frac{1}{2}$ gives that $d_G(v)\leq 7$. Hence, if $v$ is incident with one more block $D$ with $D\notin\{ B,C\}$, then $v$ has degree exactly two in $D$ and so $D\cong K_1+P_2\cong K_3$. Thus, $G\cong K_1 +(K_3\sqcup P_2)$ or $G\cong K_1 +(K_3\sqcup 2P_2)$. We may now assume that $d_D(v)\geq 3$ for any block $D$ containing $v$. 

It follows that $G$ has exactly two blocks $B\cong K_4=K_1+K_3$ and $C$ as $d_G(v)\leq 8$. Suppose $C$ is fan graph with center $v$. Then $C\cong K_1+P_\ell$ with $2\le \ell\le 5$. Observe that $d_C(r)=\ell\geq 3$ and hence $G\cong K_1+(K_3\sqcup P_\ell)$ with $3\le \ell \leq 5$. We may now assume that $C\cong K_4$. Then $G\cong K_1+2K_3$. Therefore,  $\Delta(G)\leq 8$ and $|G|\leq 9$. Both bounds are attained by $K_1+(K_3\sqcup P_5)$.

This completes the proof of Theorem~\ref{thm:K_23}.
\end{proof}

\section{Proof of Theorem~\ref{t:main} for \texorpdfstring{$t\ge4$}{t >= 4}}\label{s:t=4}

 To prove Theorem~\ref{t:main} for $t\ge4$, we use the following results concerning the second-smallest eigenvalues of the normalized Laplacian and the Laplacian of a connected graph. For a graph $G$, let $A$ and $D$ denote its adjacency matrix and diagonal degree matrix, respectively, and define \[ \mathscr L:=I-D^{-1/2}AD^{-1/2} \qquad\text{and}\qquad L:=D-A. \] Let $\mu_2$ and $\lambda_2$ denote the second-smallest eigenvalues of $\mathscr L$ and $L$, respectively.


\begin{thm}[Lin, Lu, and Yau~\cite{LLY}]\label{t:curvature_to_spectral}
Let $G$ be a connected graph and $\mu_2$ be the second smallest eigenvalue of $\mathscr L=I-D^{-1/2}AD^{-1/2}$. Then 
\[\mu_2\geq \min_{xy\in E(G)}\kappa_{\textrm{LLY}}(x,y).\]
\end{thm}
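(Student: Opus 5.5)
We will follow the original contraction argument of Lin, Lu, and Yau~\cite{LLY}: the $\alpha$-lazy random walk operator contracts Lipschitz constants by a factor governed by the curvature, while it scales an eigenfunction by a factor governed by $\mu_2$. Comparing the two factors and letting $\alpha\to1$ gives the inequality.

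First we pass from $\mathscr L$ to the random-walk Laplacian. Since $D^{-1}A=D^{-1/2}(D^{-1/2}AD^{-1/2})D^{1/2}$, the operator $I-D^{-1}A$ is similar to $\mathscr L$ and has the same spectrum. So there is a nonconstant real function $f$ on $V(G)$ with $\LL f=-\mu_2 f$, where $\LL$ is the normalized Laplacian of \cref{thm:curvature_laplacian}. Fix $0\le\alpha<1$ and let $P_\alpha g(x):=\sum_y m^\alpha_x(y)g(y)$. Then $P_\alpha=I+(1-\alpha)\LL$, and hence
\[
P_\alpha f=\bigl(1-(1-\alpha)\mu_2\bigr)f .
\]
Because $\mu_2\le2$, the factor $1-(1-\alpha)\mu_2$ is nonnegative once $\alpha\ge1/2$.

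Second, we prove the contraction estimate. Let $\kappa:=\min_{xy\in E(G)}\kappa_{\textrm{LLY}}(x,y)$ and $\kappa_\alpha^{\min}:=\min_{xy\in E(G)}\kappa_\alpha(x,y)$. For any function $g$ with Lipschitz constant $\mathrm{Lip}(g)$ and any coupling $A$ of $m^\alpha_x$ and $m^\alpha_y$,
\[
|P_\alpha g(x)-P_\alpha g(y)|=\Bigl|\sum_{u,w}A(u,w)\bigl(g(u)-g(w)\bigr)\Bigr|\le \mathrm{Lip}(g)\sum_{u,w}A(u,w)\,d(u,w).
\]
Taking the infimum over couplings gives $|P_\alpha g(x)-P_\alpha g(y)|\le \mathrm{Lip}(g)\,W(m^\alpha_x,m^\alpha_y)$. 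To extend the curvature bound from edges to all pairs, we use that $W$ is a metric. Along a geodesic $x=x_0,x_1,\dots,x_k=y$ with $k=d(x,y)$,
\[
W(m^\alpha_x,m^\alpha_y)\le\sum_{i=1}^{k}W(m^\alpha_{x_{i-1}},m^\alpha_{x_i})\le\sum_{i=1}^{k}\bigl(1-\kappa_\alpha(x_{i-1},x_i)\bigr)\le \bigl(1-\kappa_\alpha^{\min}\bigr)\,d(x,y).
\]
Therefore $\mathrm{Lip}(P_\alpha g)\le(1-\kappa_\alpha^{\min})\,\mathrm{Lip}(g)$ for every $g$.

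Third, we combine the two facts. Applying the contraction estimate to $g=f$, which has $\mathrm{Lip}(f)>0$ since $f$ is nonconstant on a connected graph, gives
\[
\bigl(1-(1-\alpha)\mu_2\bigr)\mathrm{Lip}(f)=\mathrm{Lip}(P_\alpha f)\le(1-\kappa_\alpha^{\min})\,\mathrm{Lip}(f).
\]
Hence $\mu_2\ge\kappa_\alpha^{\min}/(1-\alpha)$. The graph has finitely many edges and each limit $\kappa_\alpha(x,y)/(1-\alpha)\to\kappa_{\textrm{LLY}}(x,y)$ exists, so $\kappa_\alpha^{\min}/(1-\alpha)\to\kappa$ as $\alpha\to1$. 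This yields $\mu_2\ge\kappa$.

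The main point to check carefully is the edge-to-pair extension in the second step. It relies only on the triangle inequality for $W$, which follows by gluing couplings; this holds on finite graphs without further work. The remaining steps are routine.
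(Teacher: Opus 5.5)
Your proof is correct. The paper does not prove this statement; it quotes it from Lin--Lu--Yau, and your argument is the Ollivier-type contraction argument used there: $P_\alpha$ scales the eigenfunction by $1-(1-\alpha)\mu_2$ and contracts Lipschitz constants by $1-\kappa_\alpha^{\min}$, and then you let $\alpha\to1$. One small simplification: you do not need the triangle inequality for $W$. On a graph $\mathrm{Lip}(h)=\max_{xy\in E(G)}|h(x)-h(y)|$, so it is enough to bound $|P_\alpha g(x)-P_\alpha g(y)|$ on edges.
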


\begin{thm}[Ostrowski~\cite{Ostrowski1959}]\label{t:spectral}
Let $G$ be a connected graph with minimum degree $\delta$. Let $\mu_2$ and $\lambda_2$ be the respective second-smallest eigenvalues of $\mathscr L=I-D^{-1/2}AD^{-1/2}, L=D-A$.  Then 
\[\lambda_2\geq \delta \mu_2.\]
\end{thm}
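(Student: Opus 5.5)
This is a Rayleigh-quotient comparison. I would use the variational (Courant--Fischer) characterizations of both eigenvalues and feed a $\lambda_2$-eigenvector of $L$, after a suitable shift, into the quotient for $\mu_2$.

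First I would record the two characterizations. Since $L$ is symmetric with kernel spanned by the all-ones vector $\mathbf 1$ (as $G$ is connected),
\[
\lambda_2=\min_{\substack{f\ne 0\\ \sum_v f(v)=0}}\frac{\sum_{uv\in E(G)}(f(u)-f(v))^2}{\sum_v f(v)^2}.
\]
For $\mathscr L$, the kernel is spanned by $D^{1/2}\mathbf 1$. Substituting $g=D^{1/2}f$ into the Rayleigh quotient $g^{T}\mathscr L g/g^{T}g$ gives
\[
\mu_2=\min_{\substack{f\ne 0\\ \sum_v d(v)f(v)=0}}\frac{\sum_{uv\in E(G)}(f(u)-f(v))^2}{\sum_v d(v)f(v)^2}.
\]
The numerators agree because $g^{T}\mathscr L g=f^{T}(D-A)f=f^{T}Lf$.

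Next, let $f$ be an eigenvector of $L$ for $\lambda_2$, so $f\perp\mathbf 1$ and $f\ne0$. Put $c:=\sum_v d(v)f(v)/\sum_v d(v)$ and $h:=f-c\mathbf 1$. Then $\sum_v d(v)h(v)=0$, and $h\ne 0$ because $f$ is not constant. So $h$ is admissible in the formula for $\mu_2$, and its numerator equals that of $f$, namely $\lambda_2\sum_v f(v)^2$. For the denominator, $d(v)\ge\delta$ gives
\[
\sum_v d(v)h(v)^2\;\ge\;\delta\sum_v (f(v)-c)^2\;=\;\delta\Big(\sum_v f(v)^2+|G|c^2\Big)\;\ge\;\delta\sum_v f(v)^2 .
\]
The middle equality uses $\sum_v f(v)=0$, which kills the cross term. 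Hence $\mu_2\le \lambda_2\sum_v f(v)^2\big/\bigl(\delta\sum_v f(v)^2\bigr)=\lambda_2/\delta$, i.e.\ $\lambda_2\ge\delta\mu_2$.

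The only delicate point is handling the two different orthogonality constraints ($\mathbf 1$ versus degree-weighted). The shift by $c$ solves it: it moves $f$ into the $\mu_2$-admissible class without changing the numerator, and it can only increase the unweighted sum of squares because $f\perp\mathbf 1$. Everything else is routine.
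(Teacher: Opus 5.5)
Your proof is correct and complete. Three steps carry it:
\begin{itemize}
\item the substitution $g=D^{1/2}f$ turns $\mu_2$ into the degree-weighted Rayleigh quotient $f^{T}Lf/f^{T}Df$ under the constraint $\sum_v d(v)f(v)=0$;
\item the shift $h=f-c\mathbf 1$ repairs the mismatch with the constraint $f\perp\mathbf 1$ without changing the numerator;
\item the identity $\sum_v (f(v)-c)^2=\sum_v f(v)^2+|G|c^2$ holds because $\sum_v f(v)=0$.
\end{itemize}
You implicitly use $|G|\ge 2$, so $\delta\ge 1$, and that the numerator is nonnegative in the final division. Both are harmless.

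The paper gives no proof of this statement. It simply cites Ostrowski's 1959 quantitative form of Sylvester's law of inertia. Applied to the congruence $L=D^{1/2}\mathscr L D^{1/2}$, that theorem gives $\lambda_k=\theta_k\mu_k$ for some $\theta_k\in[\delta,\Delta]$, for every $k$.

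So the citation yields the two-sided bound $\delta\mu_k\le\lambda_k\le\Delta\mu_k$ for all $k$ at once. Yours is a self-contained proof of exactly the one inequality the paper needs, the case $k=2$. It is tied to that case because it relies on knowing the bottom eigenvectors $\mathbf 1$ and $D^{1/2}\mathbf 1$ explicitly, which is what forces the shift.

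To recover the general statement with your own tools, use the subspace form of Courant--Fischer:
\[
\mu_k=\min_{\dim U=k}\ \max_{0\ne f\in U} \frac{f^{T}Lf}{f^{T}Df},\qquad \lambda_k=\min_{\dim U=k}\ \max_{0\ne f\in U} \frac{f^{T}Lf}{f^{T}f}.
\]
Since $f^{T}Lf\ge 0$ and $\delta f^{T}f\le f^{T}Df\le \Delta f^{T}f$, comparing the two quotients on each $k$-dimensional subspace $U$ gives both bounds directly. No orthogonality constraint, and hence no shift, is needed.
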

\begin{thm}[Gima, Hanaka, Noro, Ono, Otachi~{\cite{GHNOO}}]\label{thm:spectral-treewidth}
Every $n$-vertex graph $G$ with maximum degree $\Delta(G)\ge1$ has treewidth
\begin{equation}\label{eq:spectral-treewidth}
\tw(G)\ge\frac{n\lambda_2}{\Delta(G)+\lambda_2}-1,
\end{equation}
where $\lambda_2$ is the second-smallest eigenvalue of $L=D-A$.
\end{thm}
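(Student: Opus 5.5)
The inequality in the statement rearranges as follows. With $s:=\tw(G)+1$, it is equivalent to $s(\Delta(G)+\lambda_2)\ge n\lambda_2$, that is,
\[
\lambda_2\le \frac{s\,\Delta(G)}{n-s}\qquad(s<n).
\]
When $s\ge n$ there is nothing to prove. So the plan is to produce a small vertex set $S$ with $|S|\le \tw(G)+1$ whose removal splits $G$ into pieces, and then plug a test vector built from that splitting into the Courant--Fischer characterization
\[
\lambda_2=\min_{x\perp \mathbf 1,\,x\neq 0}\frac{\sum_{uv\in E}(x_u-x_v)^2}{\sum_u x_u^2}.
\]

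\textbf{Step 1 (separator).} Take an optimal tree decomposition and use the standard balanced-bag argument. Orient each tree edge towards the side containing more than half of the vertices outside the adjacent bag; a sink bag $S$ then has $|S|\le \tw(G)+1$. Every component of $G-S$ then has at most $(n-|S|)/2$ vertices, or at most $n/2$, depending on which version of the lemma one uses; I would aim for the former.

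\textbf{Step 2 (test vector).} Let $C_1,\dots,C_m$ be the components of $G-S$, and put $e_i:=e(C_i,S)$. Since $S$ separates, $\sum_i e_i\le \Delta(G)|S|$. For vectors of the form $x=\sum_i c_i\mathbf 1_{C_i}$ (zero on $S$), the quadratic form is diagonal: $x^\top Lx=\sum_i c_i^2e_i$. I would group the components greedily into two parts $A,B$ so that both $|A|$ and $|B|$ are at least roughly $(n-s)/2$, which is possible because every $|C_i|\le (n-s)/2$. Then I use $x=|B|\mathbf 1_A-|A|\mathbf 1_B$, which is orthogonal to $\mathbf 1$. The resulting Rayleigh quotient is
\[
\frac{|B|^2e(A,S)+|A|^2e(B,S)}{|A||B|(|A|+|B|)}.
\]
I would then bound this by $\Delta s/(n-s)$ using $e(A,S)+e(B,S)\le\Delta s$ and the balance $|A|,|B|\le (n-s)/2$-ish.

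\textbf{Main obstacle.} I expect the constant in Step 2 to be the hard part. The crude bound above only gives something like $\lambda_2\le 2\Delta s/(n-s)$, and getting the sharp constant requires a better choice of weights. What I would try first is to choose $c_i$ adapted to the ratios $\rho_i=e_i/|C_i|$. By the mediant inequality, $\sum e_i/\sum|C_i|\le \Delta s/(n-s)$. The idea is to split the components by comparing $\rho_i$ with this average, weighting the low-ratio side more heavily, and optimizing the two weights subject to $x\perp\mathbf 1$; this turns the problem into minimizing a ratio of two quadratics in two variables. If that still loses a constant, the fallback is to use a test vector that is nonzero on $S$ as well, namely $x=\mathbf 1_A-\tfrac{|A|}{n-|A|}\mathbf 1_{V\setminus A}$. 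This gives $\lambda_2\le n\,e(A,S)/(|A|(n-|A|))$. I would then choose $A$ to be a union of components with $|A|$ as close to $n/2$ as possible and recheck whether the target inequality follows.
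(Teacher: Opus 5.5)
The paper does not prove this statement; it is quoted from Gima et al. So I am judging your plan on its own merits. Your reduction to $\lambda_2\le s\Delta/(n-s)$ is right, and so is the architecture: a bag $S$ whose removal leaves components with at most $(n-|S|)/2$ vertices, plus a test vector that is constant on components and zero on $S$. However, the proposal stops exactly where the sharp constant is decided, and none of the options you list delivers it.

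Greedy grouping cannot make $|A|,|B|\approx (n-s)/2$; three components of size $(n-s)/3$ already prevent this. Also, with $x\perp\mathbf 1$, $x|_S=0$ and two groups, there are no weights left to optimize: $x$ is forced to be a multiple of $|B|\mathbf 1_A-|A|\mathbf 1_B$. Write $e_A=e(A,S)$, $e_B=e(B,S)$, $E=e_A+e_B$, $\rho_A=e_A/|A|$, $\rho_B=e_B/|B|$ and $m=n-|S|$. One then has the identity
\[
\frac{x^\top Lx}{x^\top x}=\frac{E+(|B|-|A|)(\rho_A-\rho_B)}{m}.
\]
So splitting the components by comparing $\rho_i$ with the average does not work, because the error term is positive whenever the low-ratio side is the larger one. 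For example, take two components of size $0.45m$ with almost no edges to $S$ and one of size $0.1m$ carrying all of them; this gives roughly $9E/m$. The cut-vector fallback $\lambda_2\le n\,e(A,S)/(|A|(n-|A|))$ also loses a factor of about $2$. It does so already when $G-S$ is two equal components that share the edges to $S$ equally.

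The missing idea is to let the identity dictate the choice. Take $A$ to be a \emph{single} component $C_i$ minimizing $\rho_i=e_i/|C_i|$, and let $B$ be the union of all other components; there are at least two components when $|S|<n$. Then $|A|\le m/2\le |B|$ by the separator property. Also $\rho_A\le\rho_B$, because $\rho_B$ is a mediant of the other $\rho_j$. Hence the error term is $\le 0$, and
\[
\lambda_2\le \frac{e(S,V\setminus S)}{n-|S|}\le \frac{\Delta|S|}{n-|S|}.
\]
This bound is increasing in $|S|$ and rearranges to the claim.

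This argument genuinely needs the bound $(n-|S|)/2$, not $n/2$, and your Step 1 sketch does not yet give it. The threshold depends on the bag at the tail of the edge, so both orientations of a tree edge can qualify. For the path $v_1\cdots v_{2k+1}$ with bags $\{v_i,v_{i+1}\}$, no bag works at all. Step 1 is repaired by using a nice tree decomposition. Orient $t\to t'$ when more than $(n-|X_t|)/2$ vertices occur only in bags on the $t'$ side; then each node has at most one out-edge. Since adjacent bags differ in at most one vertex, an integrality count shows that no edge is oriented both ways, so a sink bag exists.
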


\begin{thm}[Liu, Yoo~{\cite{LY}}]\label{thm:apex-forest}
If a graph $F$ becomes a forest after deleting at most one vertex, then every graph of treewidth at least $|V(F)|-1$ contains $F$ as a minor.
\end{thm}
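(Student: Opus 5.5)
We would reduce the statement to a cleaner target and then argue by a minimal counterexample. Put $k:=|V(F)|$ and let $a$ be a vertex with $F-a$ a forest; if no deletion is needed, take any vertex as $a$. Adding edges between the components of $F-a$ turns it into a tree $T$ on $k-1$ vertices. Then $F$ is a subgraph of $K_1+T$, with $a$ playing the role of the cone vertex. It therefore suffices to prove the following.

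\emph{Reduced claim.} For every tree $T$ on $k-1$ vertices, every graph $G$ with $\tw(G)\ge k-1$ contains $K_1+T$ as a minor.

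Treewidth is minor-monotone, so we may replace $G$ by a minor-minimal graph with $\tw(G)\ge k-1$. Then deleting any edge, or contracting any edge, drops the treewidth to exactly $k-2$.

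The first structural consequence we would extract is $\delta(G)\ge k-1$. Suppose a vertex $v$ had $d_G(v)\le k-2$. Take a tree decomposition of $G-v$ of width $\le k-2$. Since $\tw(G)\ge k-1$, no bag of it contains all of $N_G(v)$. We would then argue, via the standard ``contract $v$ into a neighbor'' trick, that some contraction $G/vw$ still has treewidth at least $k-1$, contradicting minimality.

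We would also record a bramble $\mathcal B$ of order $k$ in $G$, which exists by Seymour--Thomas duality. Its role is to supply, at every stage, a touching connected set that is disjoint from any fewer than $k$ vertices already used.

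The model of $K_1+T$ would be built greedily. We root $T$ and order its vertices $t_1,\dots,t_{k-1}$ so that each $t_i$ with $i\ge2$ has its parent among $t_1,\dots,t_{i-1}$. The apex branch set $X_0$ would be a connected set chosen first; a natural choice is a single vertex $x$, or a small connected set hitting many bramble elements.

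We then want pairwise disjoint connected sets $X_1,\dots,X_{k-1}$ in $G-X_0$ such that each $X_i$ touches $X_0$ and touches the branch set of its parent. We build them one at a time. At step $i$ fewer than $k$ branch sets are in use, so the bramble guarantees a bramble element avoiding them. Using the minimum-degree bound $\delta(G)\ge k-1$, we extend the branch set of the parent of $t_i$ toward such an element. We would then route the new set $X_i$ so that it also reaches $X_0$. To make this possible, $X_0$ itself can be grown at each step along a path inside an unused bramble element.

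The main obstacle is keeping two adjacency requirements simultaneously at every step: $X_i$ must touch its parent and must touch the apex, without exhausting the available room. A plain greedy argument does not obviously ensure this.

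To handle it, we would first try to strengthen the induction hypothesis. The strengthened statement would say: for every vertex set $Z$ with $|Z|\le k-1-|T'|$ that avoids the current model, there is a model of $K_1+T'$ in which every leaf branch set can still reach the bramble elements that avoid $Z$. This is in the spirit of the linkedness arguments used for excluding forests via pathwidth.

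If that fails, the fallback is to exploit minor-minimality more strongly. Contraction-criticality of every edge $xy$ should force $N(x)\cup N(y)$ to be large relative to the separators of an optimal decomposition of $G/xy$. From this we would try to derive that $G[N(x)]$, after contracting a suitable connected piece, has minimum degree at least $k-2$. In that case $T$ embeds there greedily as a subgraph, and $x$ serves as the apex.
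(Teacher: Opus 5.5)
The paper does not prove this statement; it imports it verbatim from Liu and Yoo~\cite{LY}. Your sketch therefore has to stand on its own, and as written it is a plan, not a proof.

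\textbf{The core construction is left open.} Your reduction to $F=K_1+T$ with $T$ a tree on $k-1$ vertices is correct, and so is the existence of a bramble of order $k$, but these are only setup. The decisive step is building disjoint connected sets $X_1,\dots,X_{k-1}$, each touching both the apex set $X_0$ and its parent's set. This is exactly where you concede that ``a plain greedy argument does not obviously ensure this.'' Your ``strengthened induction hypothesis'' is not stated precisely, and neither it nor the fallback is carried out.

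\textbf{The bramble cannot do what you ask of it.} A bramble of order $k$ only guarantees that a set of fewer than $k$ \emph{vertices} misses some bramble element. What you have at each step is fewer than $k$ \emph{branch sets}, and you deliberately keep enlarging them, including $X_0$. Their union can easily contain $k$ or more vertices and meet every element of $\mathcal B$.

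\textbf{Your minimum-degree lemma is false.} Minor-minimal graphs of treewidth $k-1$ need not have minimum degree $k-1$. The pentagonal prism $C_5\square K_2$ and the Wagner graph $V_8$ are excluded minors for treewidth at most $3$. So each has treewidth $4$ and every proper minor has treewidth at most $3$, yet both are $3$-regular. For $k=5$ your claim would force $\delta(G)\ge 4$, so no ``contract $v$ into a neighbor'' argument can exist.

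\textbf{The fallback fails on the same graphs.} You propose taking a single vertex $x$ as the apex and placing $T$ inside (a contraction of) $G[N(x)]$. In these graphs $|N(x)|=3<k-1$, so this is impossible. Any model of $K_1+T$ there must use an apex branch set with at least two vertices. For example, in the prism, with pentagons $a_0\dots a_4$ and $b_0\dots b_4$ and rungs $a_ib_i$, $K_1+K_{1,3}$ is realized by the branch sets $\{a_0,a_1,a_2\}$, $\{b_0,b_3,b_4\}$, $\{a_3\}$, $\{a_4\}$, $\{b_1,b_2\}$.

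What is missing is a genuine mechanism that keeps each new branch set adjacent to both the apex and its parent when branch sets are large, without any minimum-degree crutch. That mechanism is the actual content of the Liu--Yoo theorem, and your proposal does not supply it.
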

We  are now ready to prove Theorem~\ref{t:main} for $t\ge 4$.
\begin{proof}[Proof of Theorem~\ref{t:main} for $t\ge 4$]
Let $t\ge 4$ be an integer, and let $G$ be a connected positively Lin--Lu--Yau curved graph with $\delta(G)\geq 2$ and no $K_{2,t}$ minor. It follows from Theorem~\ref{t:maxdeg} that $\Delta(G)$ is bounded, and so, for any edge $xy\in E(G)$, the infimum in the equality for $\kappa_{\textrm{LLY}}(x,y)$ in Theorem~\ref{thm:curvature_laplacian} can be replaced by minimum. Since  $\kappa_{\textrm{LLY}}(x,y)>0$, we have that $\kappa_{\textrm{LLY}}(x,y)\geq \frac{1}{\lcm(d(x), d(y))}\geq \frac{1}{\Delta(G)(\Delta(G)-1)}$ for any $xy\in E(G)$. Let $\lambda_2$ be the second-smallest eigenvalue of $L=D-A$, where $A$ and $D$ denote the adjacency matrix and diagonal degree matrix of $G$, respectively. Thus, it follows from Theorem~\ref{t:curvature_to_spectral} and Theorem~\ref{t:spectral} that $\lambda_2\geq \frac{\delta(G)}{\Delta(G)(\Delta(G)-1)}\geq \frac{2}{\Delta(G)(\Delta(G)-1)}.$ Since $K_{2,t}$ becomes a forest after deleting one  vertex of degree $t$, every graph with no $K_{2,t}$ minor has treewidth at most $|V(K_{2,t})|-2=t$ by Theorem~\ref{thm:apex-forest}. Hence, $G$ has treewidth at most $t$. By Theorem~\ref{thm:spectral-treewidth}, 
\[|G|\leq (t+1)\left(\frac{\Delta(G)}{\lambda_2}+1\right)\leq (t+1) (\frac{\Delta(G)^2(\Delta(G)-1)}2+1).\]
This completes the proof of Theorem~\ref{t:main}.
\end{proof}

\noindent{\bf Declaration of  Generative AI Use}: {The authors used ChatGPT 5.6 to assist in formulating the statements of the main results and to generate preliminary proof attempts. The proofs presented in this paper were subsequently developed by the authors, primarily using their own mathematical ideas and reasoning, and were written in their own words. All final arguments and proofs were independently verified by the authors. The authors take full responsibility for the accuracy, integrity, and final content of the manuscript.}

\bibliographystyle{abbrv}
\bibliography{reference}

\end{document}